\crefname{hypothesis}{Hypothesis}{Hypotheses}
\crefname{fact}{Fact}{Facts}
\title{Adaptive Inertial Method\thanks{Submitted to the editors DATE.
\funding{This work was funded by Theme-based Research Project T32-615/24-R from the Hong Kong Research Grants Council.}}}
\author{Han Long\thanks{Department of Industrial Engineering and Decision Analytics, The Hong Kong University of Science and Technology, Hong Kong SAR, China (\email{han.long@connect.ust.hk}, \email{jiheng@ust.hk}).}
\and Bingsheng He\thanks{Department of Mathematics, Nanjing University, Nanjing, China (\email{hebma@nju.edu.cn}).}
\and Yinyu Ye\thanks{Department of Management Science and Engineering, Stanford University, Stanford, CA, USA, and Department of Industrial Engineering and Decision Analytics, The Hong Kong University of Science and Technology, Hong Kong SAR, China (\email{yyye@stanford.edu}).}
\and Jiheng Zhang\footnotemark[2]
}
\DeclareMathOperator{\Diag}{Diag}
\DeclareMathOperator{\argmin}{arg\,min}
\begin{document}

\maketitle

\begin{abstract} 
	In this paper, we introduce the Adaptive Inertial Method (AIM), a novel framework for accelerated first-order methods through a customizable inertial term.
	We provide a rigorous convergence analysis establishing a global convergence rate of $O(1/k)$ under mild conditions, requiring only convexity and local Lipschitz differentiability of the objective function.
	Our method enables adaptive parameter selection for the inertial term without manual tuning.
	Furthermore, we derive the particular form of the inertial term that transforms AIM into a new Quasi-Newton method.
	Notably, under specific circumstances, AIM coincides with the regularized Newton method, achieving an accelerated rate of $O(1/k^2)$ without Hessian inversions.
	Through extensive numerical experiments, we demonstrate that AIM exhibits superior performance across diverse optimization problems, highlighting its practical effectiveness.
\end{abstract}

\begin{keywords}
	unconstrained convex optimization, nonlinear programming, accelerated first-order methods
\end{keywords}

\begin{MSCcodes}
	90C25, 90C30, 90C53, 65K10
\end{MSCcodes}

\section{Introduction}

\par Consider the unconstrained convex optimization problem:
\begin{equation}
	\min\limits_{\bm{x} \in \mathbb{R}^n} f(\bm{x}),
	\label{prob_uc}
\end{equation}
where $f:\mathbb{R}^n \rightarrow \mathbb{R}$ is convex and differentiable with Lipschitz continuous gradient, i.e., there exists a constant $D > 0$ such that
\begin{equation*}
	\|\nabla f(\bm{x}) - \nabla f(\bm{y})\| \leq D \|\bm{x} - \bm{y}\|, \quad \forall \bm{x},\ \bm{y} \in \mathbb{R}^n.
\end{equation*}

\par Accelerated first-order methods provide more efficient approaches to solving problem \eqref{prob_uc} than standard gradient descent while preserving computational simplicity. These methods can be generally formulated through the following iterative scheme:
\begin{equation}
	\bm{x}^{k+1} = \bm{x}^k - \beta_k \nabla f(\bm{x}^k) + \gamma_k \bm{m}^k.
	\label{it_acm}
\end{equation}
Here, $\bm{m}^k$ denotes the inertial term (also referred to as the momentum direction), which strategically incorporates information from previous iterates and gradients to enhance convergence. This concept draws inspiration from physics, where inertia characterizes an object's resistance to changes in its state of motion. Most accelerated first-order algorithms can be represented using formula \eqref{it_acm} by specifying appropriate values for the step size $\beta_k$, the inertial weight $\gamma_k$, and the inertial term $\bm{m}^k$.
Here are some prominent examples:
\begin{itemize}
	\item Heavy-Ball method \cite{polyak1964some} (HB):
	      \begin{equation*}
		      \beta_k = \beta \in \bigg(0, \frac{1 - \gamma}{D}\bigg], \quad \gamma_k = \gamma \in [0, 1), \quad \bm{m}_{\text{HB}}^k = \bm{x}^k - \bm{x}^{k - 1}.
	      \end{equation*}
	\item Nesterov Accelerated Gradient \cite{nesterov2013introductory} (NAG):
	      \begin{equation*}
		      \beta_k \in \bigg(0, \frac{1}{D} \bigg], \quad \gamma_k = 1, \quad \bm{m}_{\text{NAG}}^k = (\tilde{\bm{x}}^k - \bm{x}^k) - \beta_k \big[\nabla f(\tilde{\bm{x}}^k) - \nabla f(\bm{x}^k)\big],
	      \end{equation*}
		  where
		  \begin{align*}
			&f(\bm{x}^{k+1}) \leq f(\tilde{\bm{x}}^k) - \frac{\beta_k}{2} \| f(\tilde{\bm{x}}^k) \|^2 \text{ is ensured by backtracking line search,} \\
			& \text{choose } \theta_k \in (0, 1) \text{ such that } \frac{(1 - \theta_k) \beta_k}{\theta_k^2} \leq \frac{\beta_{k-1}}{\theta_{k-1}^2}, \\
			&\tilde{\bm{x}}^k = \bm{x}^k + \frac{\theta_k (1 - \theta_{k-1})}{\theta_{k-1}}(\bm{x}^k - \bm{x}^{k-1}).
		\end{align*}
	\item Adaptive Gradient Algorithm \cite{duchi2011adaptive} (AdaGrad):
	      \begin{equation*}
		      \beta_k = \beta, \quad \gamma_k = \beta, \quad \bm{m}_{\text{AdaGrad}}^k = \bigg[ \mathrm{I}_n - \bigg( \sqrt{\Diag\big( \hat{\bm{h}}^k + \bm{\varepsilon} \big)} \bigg)^{-1} \bigg] \nabla f(\bm{x}^k),
	      \end{equation*}
		  where
		  \begin{equation*}
			\hat{\bm{h}}^k = \hat{\bm{h}}^{k-1} + \nabla f(\bm{x}^k) \odot \nabla f(\bm{x}^k).
		\end{equation*}
	\item Adaptive Moment Estimation \cite{kingma2014adam} (Adam):
	      \begin{equation*}
		      \beta_k = \beta, \quad \gamma_k = \beta,\quad \bm m_{\text{Adam}}^k = \nabla f(\bm{x}^k) - \bigg( \sqrt{\Diag\big( \hat{\bm{h}}^k + \bm{\varepsilon} \big)} \bigg)^{-1} \hat{\bm{g}}^k,
	      \end{equation*}
	      where
	      \begin{align*}
		       & \hat{\bm{g}}^k = \alpha_1 \hat{\bm{g}}^{k-1} + (1 - \alpha_1) \nabla f(\bm{x}^k),\quad 0 < \alpha_1 < 1,                          \\
		       & \hat{\bm{h}}^k = \alpha_2 \hat{\bm{h}}^{k-1} + (1 - \alpha_2) \nabla f(\bm{x}^k) \odot \nabla f(\bm{x}^k),\quad 0 < \alpha_2 < 1.
	      \end{align*}
\end{itemize}

The theoretical study of accelerated first-order methods continues to attract significant attention. 
Contemporary research has predominantly focused on two main directions: Polyak-type acceleration \cite{aujol2022convergence, saab2022adaptive, aujol2023convergence, jin2024adaptive} and Nesterov-type methods \cite{carmon2018accelerated, farazmand2020multiscale, attouch2022ravine, josz2023convergence}. 
Additionally, many works have explored the physical interpretations of both Polyak's method \cite{attouch2000heavy, francca2020conformal} and Nesterov's acceleration \cite{wibisono2016variational, betancourt2018symplectic} through Lagrangian and Hamiltonian formalisms. 

The analyses in these works heavily rely on the specific formulation of the inertial term $\bm{m}^k$. 
This raises the question of whether convergence results can be established without depending on a particular form of the inertial term $\bm{m}^k$. 
While considerable work has addressed step size selection strategies $\beta_k$ through methods such as backtracking line search and trust region approaches \cite{fletcher2000practical}, the inertial weight parameter $\gamma_k$ is typically fixed rather than adaptively determined. 
Since the inertial term $\bm{m}^k$ evolves at each iteration, there naturally exist important mathematical relationships between $\gamma_k$ and $\bm{m}^k$ that warrant systematic investigation. 
The Adaptive Inertial Method (AIM) proposed in this paper addresses these two unexplored issues through a novel analytical framework that establishes rigorous connections between adaptive parameter selection and convergence properties.

\subsection*{Contributions}
\par We make the following key contributions:
\begin{itemize}
	\item[(1)] \textbf{Novel framework:} We introduce the Adaptive Inertial Method (AIM), a flexible framework that inherits and extends accelerated first-order methods through a customizable inertial term, establishing connections between various classical and modern optimization techniques.

	\item[(2)] \textbf{Theoretical guarantees:} We provide an elegant convergence analysis establishing a global convergence rate of $O(1/k)$ under mild conditions, requiring only convexity and local Lipschitz differentiability of the objective function, without additional assumptions on strong convexity.

	\item[(3)] \textbf{Adaptive parameter selection:} Our method enables fully adaptive parameter selection for the inertial term without manual tuning, making it particularly suitable for practical applications with flexible choices of inertial term.

	\item[(4)] \textbf{Customizable inertial terms:} Our method accommodates diverse choices of inertial terms beyond the standard heavy-ball momentum used in existing methods. This flexibility enables the incorporation of various acceleration effects into the optimization process, potentially facilitating the development of novel methods that would be challenging to analyze within classical theoretical frameworks.

	\item[(5)] \textbf{Novel Quasi-Newton formulation:} We derive the particular form of the inertial term that transforms AIM into a new Quasi-Newton method, demonstrating the framework's extensibility and theoretical versatility.

	\item[(6)] \textbf{Connection with regularized Newton:} Under specific circumstances, we prove that AIM coincides with the regularized Newton method, achieving an accelerated rate of $O(1/k^2)$ without requiring computationally expensive Hessian inversions, while only requiring one additional gradient evaluation.
\end{itemize}

\subsection*{Related works}
The convergence properties of first-order methods with acceleration have been extensively studied. 
For the heavy-ball method, Ghadimi and Lan \cite{ghadimi2015global} established its global convergence rate in terms of the average iterates, while Sun et al.\ \cite{sun2019non} later provided convergence guarantees for the last iterate. 
Drusvyatskiy et al.\ \cite{drusvyatskiy2018optimal}, inspired by the geometric approach of Bubeck et al.\ \cite{bubeck2015geometric}, developed a method that constructs and maintains a quadratic lower model of the objective function. Their approach guarantees that the model's minimum value converges to the true optimum at an optimal linear rate, providing both theoretical elegance and practical efficiency. 
Kim et al.\ \cite{kim2021optimizing} presented a systematic framework for optimizing step coefficients in first-order methods for smooth convex minimization. Their computationally efficient approach, based on performance estimation programming, achieves near-optimal worst-case gradient norm reduction, further advancing the theoretical foundations of accelerated optimization methods. 
In a parallel development, Xie et al.\ \cite{xie2022adaptive} proposed Adaptive Inertia (Adai), a novel optimization method that adaptively adjusts momentum to achieve faster convergence and better generalization by favoring flat minima, demonstrating superior performance over SGD and Adam variants across various datasets and models. More recent advances have focused on incorporating second-order information and adaptive techniques. 
Additionally, Zhang et al.\ \cite{zhang2022drsom} introduced DRSOM, which determines the coefficient of gradient and momentum direction by minimizing a quadratic approximation, effectively solving a two-dimensional linear system that incorporates Hessian approximation, theoretically achieving local quadratic convergence. 
Following this trend, Xie et al.\ \cite{xie2024adan} developed Adan, an innovative optimization algorithm for deep learning that combines adaptive learning rates with reformulated Nesterov momentum, enhancing convergence speed without the computational burden of traditional Nesterov acceleration. Their approach demonstrates superior performance across diverse tasks and models, supported by theoretical guarantees that align with optimal convergence rates. 
Most recently, Bao et al.\ \cite{bao2024accelerated} introduced a gradient restarted accelerated proximal gradient method, demonstrating global linear convergence for strongly convex composite optimization problems, with further validation through ODE model analysis for quadratic strongly convex objectives. Their work enhances the theoretical understanding of gradient restarting, showing it outperforms non-restarted Nesterov's accelerated gradient methods by achieving linear convergence rates, partially addressing an open question from prior literature.

\subsection*{Organization}
The remainder of this paper is organized as follows. Section~2 introduces the Adaptive Inertial Method, presenting its formulation, theoretical analysis, and convergence guarantees. In Section~3, we explore various options for constructing the inertial term, including physically-inspired inertial terms, Quasi-Newton formulations, and Hessian-gradient approaches. Section~4 presents comprehensive numerical experiments that demonstrate the effectiveness of AIM across different optimization problems, including logistic regression with $\mathcal{L}_2$ regularization and $\mathcal{L}_2$-$\mathcal{L}_p$ minimization problems. Finally, Section~5 summarizes our contributions and discusses potential directions for future research. Additional implementation details and experimental results are provided in the Appendix.

\subsection*{Notations}
\par Let $\mathbb{R}$ denote the set of real numbers and $\mathbb{N}$ the set of natural numbers. We represent vectors with bold lowercase letters and matrices with uppercase letters in normal format. For a vector $\bm{x}$, $\|\bm{x}\|$ denotes its Euclidean norm. Given a symmetric positive definite matrix $\mathrm{A}$, the $\mathrm{A}$-norm of a vector $\bm{x}$ is defined as $\|\bm{x}\|_{\mathrm{A}} = \sqrt{\bm{x}^\top \mathrm{A} \bm{x}}$. We use $\odot$ to represent the element-wise (Hadamard) product. For a vector $\bm{x}$, $\mathrm{Diag}(\bm{x})$ denotes the diagonal matrix whose diagonal elements correspond to the components of $\bm{x}$. Finally, $\mathrm{I}_n$ represents the $n \times n$ identity matrix.

\section{Adaptive Inertial Method}
\par We now introduce our framework for solving problem \eqref{prob_uc}. \vspace{3pt}

\noindent
\fbox{
	\parbox{0.95\textwidth}{
		\begin{subequations}
			Define $\bm{m}^k = \phi_k(\bm{x}^k, \bm{x}^{k-1}, \ldots, \bm{x}^0)$, where $\phi_k:\mathbb{R}^{n \times (k+1)} \rightarrow \mathbb{R}^n$ represents any nonzero mapping. For a given $\bm{x}^k$, set
			\begin{equation}
				\bm{x}^{k+1} = \bm{x}^k - \beta_k \nabla f(\bm{x}^k) + \gamma_k \bm{m}^k,
				\label{it_aim}
			\end{equation}
			where $\beta_k$ is a properly chosen parameter which satisfies
			\begin{equation}
				\beta_k (\bm{x}^k - \bm{x}^{k+1})^\top [\nabla f(\bm{x}^k) - \nabla f(\bm{x}^{k+1})] \leq \eta \|\bm{x}^k - \bm{x}^{k+1}\|^2, \quad \eta \in (0, 1),
				\label{cri_aim}
			\end{equation}
			and
			\begin{equation}
				\gamma_k = \mu_k \frac{(\bm{m}^k)^\top \nabla f(\bm{x}^k)}{\|\bm{m}^k\|^2} \beta_k,\quad \mu_k \in [0, 1).
				\label{gam_aim}
			\end{equation}
			\label{alg_aim}
		\end{subequations}
		\vspace{-9pt}
	}
}

\begin{remark}
	In this context, $\bm{m}^k$ represents the customizable inertial term, which serves as a critical component in our adaptive framework. We generalize its definition by allowing $\bm{m}^k$ to be constructed as a function of all preceding iterates. This flexibility enables us to incorporate a wide variety of momentum-like effects based on the optimization trajectory history.
\end{remark}

\begin{remark}
	Condition \eqref{cri_aim} can be satisfied when $\nabla f$ is locally Lipschitz continuous near $\bm{x}^k$. Various methods can be employed to ensure this condition, such as the Armijo rule. A detailed formulation will be provided in the pseudocode of our algorithm.
\end{remark}

\begin{remark}
	While the range restriction on $\mu_k \in [0, 1)$ means AIM cannot fully encompass existing methods such as the Heavy Ball method and Nesterov Accelerated Gradient method, this limitation is counterbalanced by the complete flexibility regarding the inertial term $\bm{m}^k$. This design choice allows any nonzero direction to be incorporated into the framework, significantly enhancing its adaptability and applicability across diverse optimization scenarios.
\end{remark}

To derive a more compact formulation, let us define the following:
\begin{equation}
	\Pi_k = \frac{\bm{m}^k (\bm{m}^k)^\top}{\|\bm{m}^k\|^2},
	\label{rm1_eq1} 
\end{equation}
and
\begin{equation}
	\mathrm{M}_k^{-1} = \mathrm{I}_n - \mu_k \Pi_k, \quad \mu_k \in [0, 1).
	\label{rm1_eq2}
\end{equation}
Note that $\Pi_k$ is the projection matrix onto $\bm{m}^k$ and thus $\mathrm{M}_k^{-1}$ is positive definite. It follows that
\begin{equation}
	\mathrm{M}_k = \mathrm{I}_n + \frac{\mu_k}{1 - \mu_k}\Pi_k, \quad \mu_k \in [0, 1),
	\label{rm1_eq3}
\end{equation}
and
\begin{equation}
	\mathrm{M}_k^{-1} \ \preceq \ \mathrm{I}_n \ \preceq \ \mathrm{M}_k.
	\label{rm1_eq4}
\end{equation}
Thus, we can rewrite equation \eqref{it_aim} as
\begin{equation}
	\bm{x}^{k+1} = \bm{x}^k - \beta_k \mathrm{M}_k^{-1} \nabla f(\bm{x}^k),
	\label{rm1_eq5}
\end{equation}
providing a more concise representation of the iteration.

\begin{remark}
	The iteration scheme \eqref{rm1_eq5} can also be expressed in an implicit form:
	\begin{equation*}
		\bm{x}^{k+1} = \bm{x}^k - \beta_k \mathrm{M}_k^{-1} \nabla f(\bm{x}^{k+1}),
	\end{equation*}
	which has an equivalent characterization
	\begin{equation*}
		\bm{x}^{k+1} \in \argmin\limits_{\bm{x} \in \mathbb{R}^n} \bigg\{ f(\bm{x}) + \frac{1}{2\beta_k}\|\bm{x} - \bm{x}^k\|_{\mathrm{M}_k}^2 \bigg\}.
	\end{equation*}
	This formulation reveals that AIM can be interpreted as a generalized proximal point algorithm (PPA), where the standard Euclidean distance metric is replaced by the $\mathrm{M}_k$ norm.
\end{remark}

\subsection{Theoretical Insight for Inertial Weight}
In this section, we provide the theoretical foundation underlying the selection of parameter $\gamma_k$ in equation \eqref{gam_aim}, which represents a pivotal analytical insight in the development of the Adaptive Inertial Method. 
Beginning with the iterative scheme defined in \eqref{it_aim}, we have
\begin{equation}
	\nabla f(\bm{x}^k) = \frac{1}{\beta_k} (\bm{x}^k - \bm{x}^{k+1} + \gamma_k \bm{m}^k).
	\label{equ_uc_1}
\end{equation}
By utilizing the convexity of $f$, we have
\begin{equation}
	f(\bm{x}) \geq f(\bm{x}^k) + (\bm{x} - \bm{x}^k)^\top \nabla f(\bm{x}^k), \quad \forall \bm{x} \in \mathbb{R}^n
	\label{equ_uc_cvx_1}
\end{equation}
and
\begin{equation}
	f(\bm{x}^k) \geq f(\bm{x}^{k+1}) + (\bm{x}^k - \bm{x}^{k+1})^\top \nabla f(\bm{x}^{k+1}).
	\label{equ_uc_cvx_2}
\end{equation}
Adding \eqref{equ_uc_cvx_1} and \eqref{equ_uc_cvx_2} together, we obtain
\begin{flalign*}
	f(\bm{x}) & \geq f(\bm{x}^{k+1}) + (\bm{x} - \bm{x}^k)^\top \nabla f(\bm{x}^k) + (\bm{x}^k - \bm{x}^{k+1})^\top \nabla f(\bm{x}^{k+1})                          \\
	          & = f(\bm{x}^{k+1}) + (\bm{x} - \bm{x}^{k+1})^\top \nabla f(\bm{x}^k) - (\bm{x}^k - \bm{x}^{k+1})^\top [\nabla f(\bm{x}^k) - \nabla f(\bm{x}^{k+1})].
\end{flalign*}
By combining with \eqref{cri_aim}, we arrive at
\begin{equation}
	f(\bm{x}) \geq f(\bm{x}^{k+1}) + (\bm{x} - \bm{x}^{k+1})^\top \nabla f(\bm{x}^k) - \frac{\eta}{\beta_k}\|\bm{x}^k - \bm{x}^{k+1}\|^2, \quad \forall \bm{x} \in \mathbb{R}^n.
	\label{equ_uc_2}
\end{equation}
Further incorporating \eqref{equ_uc_1}, we derive
\begin{equation*}
	f(\bm{x}) \geq f(\bm{x}^{k+1}) + \frac{1}{\beta_k} (\bm{x} - \bm{x}^{k+1})^\top (\bm{x}^k - \bm{x}^{k+1} + \gamma_k \bm{m}^k) - \frac{\eta}{\beta_k}\|\bm{x}^k - \bm{x}^{k+1}\|^2, \quad \forall \bm{x} \in \mathbb{R}^n.
\end{equation*}
Setting $\bm{x} = \bm{x}^k$ in the above inequality, we get
\begin{flalign*}
	f(\bm{x}^k) & \geq \ f(\bm{x}^{k+1}) + \frac{1 - \eta}{\beta_k}\|\bm{x}^k - \bm{x}^{k+1}\|^2 + \frac{1}{\beta_k} (\bm{x}^k - \bm{x}^{k+1})^\top \gamma_k \bm{m}^k                                                           \\
	            & \stackrel{\mathclap{\eqref{it_aim}}}{=} \ f(\bm{x}^{k+1}) + \frac{1 - \eta}{\beta_k}\|\bm{x}^k - \bm{x}^{k+1}\|^2 + \frac{1}{\beta_k} (\beta_k \nabla f(\bm{x}^k) - \gamma_k \bm{m}^k)^\top \gamma_k \bm{m}^k \\
	            & = \ f(\bm{x}^{k+1}) + \frac{1 - \eta}{\beta_k}\|\bm{x}^k - \bm{x}^{k+1}\|^2 + \bigg[ (\bm{m}^k)^\top \nabla f(\bm{x}^k) - \frac{\|\bm{m}^k\|^2}{\beta_k} \gamma_k \bigg] \gamma_k.
\end{flalign*}
We aim for the value of the objective function to consistently decrease, implying that $f(\bm{x}^{k+1}) < f(\bm{x}^k)$. A natural approach toward this goal involves establishing the following inequality
\begin{equation*}
	\bigg[ (\bm{m}^k)^\top \nabla f(\bm{x}^k) - \frac{\|\bm{m}^k\|^2}{\beta_k} \gamma_k \bigg] \gamma_k \geq 0,
\end{equation*}
which can be equivalently expressed as
\begin{equation*}
	\bigg[ \gamma_k - \frac{(\bm{m}^k)^\top \nabla f(\bm{x}^k)}{\|\bm{m}^k\|^2} \beta_k \bigg] \gamma_k \leq 0.
\end{equation*}
This inequality directly leads to the parameter selection rule presented in \eqref{gam_aim}.

\begin{remark}
	Zhang et al.\ \cite{zhang2024adam} proposed Adam-mini, which reduces memory requirements by optimizing the allocation of learning rate resources in Adam. The main idea involves partitioning parameters into blocks based on particular Hessian structure principles and assigning a single, well-calibrated learning rate to each parameter block. For the Adaptive Inertial Method, we can apply a similar strategy. We can transform $\gamma_k$ into a block diagonal matrix $\mathrm{\Gamma}_k$ with the following structure
	\begin{equation*}
		\mathrm{\Gamma}_k = \left(
		\begin{array}{cccc}
				\gamma_{k,1} \mathrm{I}_{n_1} & 0                             & \cdots & 0                             \\
				0                             & \gamma_{k,2} \mathrm{I}_{n_2} & \cdots & 0                             \\
				\vdots                        & \vdots                        & \ddots & \vdots                        \\
				0                             & 0                             & \cdots & \gamma_{k,T} \mathrm{I}_{n_T}
			\end{array}
		\right)
	\end{equation*}
	where $n_j$ are positive integers and $n_1 + n_2 + \cdots + n_T = n$. By applying similar analysis as presented above, we can derive $\gamma_{k,j}$ for each block $j = 1, 2, \cdots, T$ as
	\begin{equation*}
		\gamma_{k,j} = \mu_{k,j} \frac{(\bm{m}_j^k)^\top [\nabla f(\bm{x}^k)]_j}{\|\bm{m}_j^k\|^2} \beta_k,\quad \mu_{k,j} \in [0, 1).
	\end{equation*}
	Here $\bm{m}_j^k$ and $[\nabla f(\bm{x}^k)]_j$ denote the j-th segmented partition of $\bm{m}^k$ and $\nabla f(\bm{x}^k)$ respectively. This blockwise approach can be potentially effective for large-scale optimization problems.
\end{remark}

\subsection{Convergence Results}
The following lemmas formalize our key result regarding the Adaptive Inertial Method, which guarantees a strict reduction in the objective function value at each iteration and thereby establishes its fundamental descent property. Throughout this subsection, we denote the optimal solution of problem \eqref{prob_uc} by $\bm{x}^*$.
\begin{lemma}
	Let $\{\bm{x}^k\}$ be the sequence generated by Adaptive Inertial Method \eqref{alg_aim}. Then, the following result holds:
	\begin{equation}
		f(\bm{x}^{k+1}) \leq f(\bm{x}^k) - \frac{1 - \eta}{\beta_k}\|\bm{x}^k - \bm{x}^{k+1}\|_{\mathrm{M}_k}^2.
		\label{thm1_eq1}
	\end{equation}
	\label{thm1}
\end{lemma}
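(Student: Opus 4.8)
The plan is to leverage the compact representation \eqref{rm1_eq5} of the iteration together with the Loewner ordering \eqref{rm1_eq4}, so that the generic convexity-plus-line-search bound \eqref{equ_uc_2} collapses directly into the stated descent inequality in the $\mathrm{M}_k$-norm. The whole argument is essentially one substitution followed by a single matrix-ordering comparison, so I do not anticipate a genuinely hard step; the crux is recognizing which inequality to invoke at the final stage.

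First, I would specialize the already-established inequality \eqref{equ_uc_2} to $\bm{x} = \bm{x}^k$, which gives
\begin{equation*}
	f(\bm{x}^k) \geq f(\bm{x}^{k+1}) + (\bm{x}^k - \bm{x}^{k+1})^\top \nabla f(\bm{x}^k) - \frac{\eta}{\beta_k}\|\bm{x}^k - \bm{x}^{k+1}\|^2.
\end{equation*}
Next, from the compact form \eqref{rm1_eq5} we have $\bm{x}^k - \bm{x}^{k+1} = \beta_k \mathrm{M}_k^{-1}\nabla f(\bm{x}^k)$, equivalently $\nabla f(\bm{x}^k) = \beta_k^{-1}\mathrm{M}_k(\bm{x}^k - \bm{x}^{k+1})$. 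Substituting this into the inner-product term and using the symmetry of $\mathrm{M}_k$ from \eqref{rm1_eq3} rewrites it exactly as a weighted norm,
\begin{equation*}
	(\bm{x}^k - \bm{x}^{k+1})^\top \nabla f(\bm{x}^k) = \frac{1}{\beta_k}\|\bm{x}^k - \bm{x}^{k+1}\|_{\mathrm{M}_k}^2.
\end{equation*}
Rearranging then yields the intermediate bound
\begin{equation*}
	f(\bm{x}^{k+1}) \leq f(\bm{x}^k) - \frac{1}{\beta_k}\|\bm{x}^k - \bm{x}^{k+1}\|_{\mathrm{M}_k}^2 + \frac{\eta}{\beta_k}\|\bm{x}^k - \bm{x}^{k+1}\|^2.
\end{equation*}

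The final and key step is to absorb the leftover Euclidean term into the $\mathrm{M}_k$-norm. Since $\mathrm{I}_n \preceq \mathrm{M}_k$ by \eqref{rm1_eq4}, we have $\|\bm{x}^k - \bm{x}^{k+1}\|^2 \leq \|\bm{x}^k - \bm{x}^{k+1}\|_{\mathrm{M}_k}^2$; because $\eta/\beta_k > 0$, replacing the Euclidean term by the larger $\mathrm{M}_k$-norm term only enlarges the right-hand side and hence preserves the upper bound. Combining the $-1/\beta_k$ and $+\eta/\beta_k$ coefficients on the now-common $\mathrm{M}_k$-norm term produces exactly $-\frac{1-\eta}{\beta_k}\|\bm{x}^k - \bm{x}^{k+1}\|_{\mathrm{M}_k}^2$, which is \eqref{thm1_eq1}.

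I expect the only point requiring care to be the direction of this last inequality: the Euclidean term enters with a positive sign, so bounding it \emph{above} by the $\mathrm{M}_k$-norm is what we need (rather than below), and this is precisely the orientation supplied by $\mathrm{I}_n \preceq \mathrm{M}_k$ together with $\eta \in (0,1)$. An alternative, more computational route avoids \eqref{rm1_eq5} entirely: start from the scalar inequality derived just before the lemma, substitute the explicit $\gamma_k$ from \eqref{gam_aim} and the expansion $\mathrm{M}_k = \mathrm{I}_n + \tfrac{\mu_k}{1-\mu_k}\Pi_k$ from \eqref{rm1_eq3}, and verify the resulting identity for $\|\bm{x}^k - \bm{x}^{k+1}\|_{\mathrm{M}_k}^2$ in terms of $(\bm{m}^k)^\top\nabla f(\bm{x}^k)$; this works but is messier, so I would present the matrix-ordering argument instead.
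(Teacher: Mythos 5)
Your proof is correct and takes essentially the same route as the paper's: both express $\nabla f(\bm{x}^k) = \beta_k^{-1}\mathrm{M}_k(\bm{x}^k - \bm{x}^{k+1})$ via \eqref{rm1_eq5}, plug this into \eqref{equ_uc_2} with $\bm{x} = \bm{x}^k$, and absorb the leftover Euclidean term using $\mathrm{I}_n \preceq \mathrm{M}_k$ from \eqref{rm1_eq4}. The only cosmetic difference is ordering—the paper substitutes the gradient expression before specializing $\bm{x}$ (yielding the intermediate inequality \eqref{thm1_eq3}, which it reuses in Lemma~\ref{thm2}), whereas you set $\bm{x} = \bm{x}^k$ first.
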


\begin{proof}
	\hspace{1pt} Revisiting \eqref{rm1_eq5}, we have
	\begin{equation}
		\nabla f(\bm{x}^k) = \frac{1}{\beta_k} \mathrm{M}_k (\bm{x}^k - \bm{x}^{k+1}).
		\label{thm1_eq2}
	\end{equation}
	Substituting \eqref{thm1_eq2} into \eqref{equ_uc_2} yeilds
	\begin{equation}
		f(\bm{x}) \geq f(\bm{x}^{k+1}) + \frac{1}{\beta_k} (\bm{x} - \bm{x}^{k+1})^\top \mathrm{M}_k (\bm{x}^k - \bm{x}^{k+1})  - \frac{\eta}{\beta_k}\|\bm{x}^k - \bm{x}^{k+1}\|^2.
		\label{thm1_eq3}
	\end{equation}
	Let $\bm{x} = \bm{x}^k$ in \eqref{thm1_eq3}, we get
	\begin{equation*}
		f(\bm{x}^{k+1}) \leq f(\bm{x}^k) - \frac{1}{\beta_k} \|\bm{x}^k - \bm{x}^{k+1}\|_{\mathrm{M}_k}^2 + \frac{\eta}{\beta_k}\|\bm{x}^k - \bm{x}^{k+1}\|^2.
		\label{thm1_eq4}
	\end{equation*}
	Apply \eqref{rm1_eq4} and we obtain \eqref{thm1_eq1}.
\end{proof}

\begin{remark}
	A notable refinement of our method can be achieved by relaxing condition \eqref{cri_aim} to
	\begin{equation}
		\beta_k (\bm{x}^k - \bm{x}^{k+1})^\top [\nabla f(\bm{x}^k) - \nabla f(\bm{x}^{k+1})] \leq \eta \|\bm{x}^k - \bm{x}^{k+1}\|^2_{\mathrm{M}_k}.
	\end{equation}
	This modification expands the feasible range for selecting $\beta_k$ values while preserving all theoretical convergence properties established in our analysis. The enhanced flexibility enables more aggressive step sizes in practice, potentially accelerating convergence without sacrificing algorithmic stability.
\end{remark}

\begin{lemma}
	Let $\{\bm{x}^k\}$ be the sequence generated by Adaptive Inertial Method \eqref{alg_aim}. Then, the following result holds:
	\begin{equation}
		\|\bm{x}^{k+1} - \bm{x}^*\|_{\mathrm{M}_k}^2 \leq \|\bm{x}^k - \bm{x}^*\|_{\mathrm{M}_k}^2 - (1 - 2\eta) \|\bm{x}^k - \bm{x}^{k+1}\|_{\mathrm{M}_k}^2 - 2\beta_k[f(\bm{x}^{k+1}) - f(\bm{x}^*)].
		\label{thm2_eq1}
	\end{equation}
	\label{thm2}
\end{lemma}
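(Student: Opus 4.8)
The plan is to instantiate the model inequality \eqref{thm1_eq3}, which was derived in the proof of \cref{thm1} and holds for every $\bm{x} \in \mathbb{R}^n$, at the minimizer $\bm{x} = \bm{x}^*$, and then convert the resulting cross term into a telescoping difference of $\mathrm{M}_k$-norms. Setting $\bm{x} = \bm{x}^*$ in \eqref{thm1_eq3} gives
\begin{equation*}
	f(\bm{x}^*) \geq f(\bm{x}^{k+1}) + \frac{1}{\beta_k}(\bm{x}^* - \bm{x}^{k+1})^\top \mathrm{M}_k (\bm{x}^k - \bm{x}^{k+1}) - \frac{\eta}{\beta_k}\|\bm{x}^k - \bm{x}^{k+1}\|^2.
\end{equation*}
Multiplying through by $2\beta_k > 0$ and rearranging isolates the inner-product term:
\begin{equation*}
	2(\bm{x}^* - \bm{x}^{k+1})^\top \mathrm{M}_k (\bm{x}^k - \bm{x}^{k+1}) \leq 2\beta_k[f(\bm{x}^*) - f(\bm{x}^{k+1})] + 2\eta\|\bm{x}^k - \bm{x}^{k+1}\|^2.
\end{equation*}

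The key algebraic step is the polarization identity for the positive-definite $\mathrm{M}_k$-inner product. Writing $\bm{u} = \bm{x}^* - \bm{x}^{k+1}$ and $\bm{v} = \bm{x}^k - \bm{x}^{k+1}$, so that $\bm{u} - \bm{v} = \bm{x}^* - \bm{x}^k$, the identity $2\bm{u}^\top \mathrm{M}_k \bm{v} = \|\bm{u}\|_{\mathrm{M}_k}^2 + \|\bm{v}\|_{\mathrm{M}_k}^2 - \|\bm{u} - \bm{v}\|_{\mathrm{M}_k}^2$ expresses the left-hand side exactly as $\|\bm{x}^{k+1} - \bm{x}^*\|_{\mathrm{M}_k}^2 + \|\bm{x}^k - \bm{x}^{k+1}\|_{\mathrm{M}_k}^2 - \|\bm{x}^k - \bm{x}^*\|_{\mathrm{M}_k}^2$. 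Substituting this in and transposing terms produces
\begin{equation*}
	\|\bm{x}^{k+1} - \bm{x}^*\|_{\mathrm{M}_k}^2 \leq \|\bm{x}^k - \bm{x}^*\|_{\mathrm{M}_k}^2 - \|\bm{x}^k - \bm{x}^{k+1}\|_{\mathrm{M}_k}^2 + 2\eta\|\bm{x}^k - \bm{x}^{k+1}\|^2 - 2\beta_k[f(\bm{x}^{k+1}) - f(\bm{x}^*)].
\end{equation*}

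Finally, I would absorb the stray Euclidean-norm term into the $\mathrm{M}_k$-norm. Since \eqref{rm1_eq4} gives $\mathrm{I}_n \preceq \mathrm{M}_k$, we have $\|\bm{x}^k - \bm{x}^{k+1}\|^2 \leq \|\bm{x}^k - \bm{x}^{k+1}\|_{\mathrm{M}_k}^2$; as this term sits on the right-hand side with the positive coefficient $2\eta$, replacing it by the larger $\mathrm{M}_k$-norm only weakens the bound and preserves the inequality. Merging the two $\|\bm{x}^k - \bm{x}^{k+1}\|_{\mathrm{M}_k}^2$ contributions, with coefficients $-1$ and $+2\eta$, yields $-(1 - 2\eta)\|\bm{x}^k - \bm{x}^{k+1}\|_{\mathrm{M}_k}^2$, which is precisely \eqref{thm2_eq1}. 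I do not anticipate a genuine obstacle: the argument is a single-point evaluation of the model inequality followed by a norm identity. The only places demanding care are the sign bookkeeping when clearing the factor $2\beta_k$, and ensuring the norm-domination step uses $\mathrm{I}_n \preceq \mathrm{M}_k$ in the direction that enlarges the right-hand side.
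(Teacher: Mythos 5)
Your proposal is correct and takes essentially the same route as the paper's own proof: evaluate \eqref{thm1_eq3} at $\bm{x} = \bm{x}^*$, apply the $\mathrm{M}_k$-norm polarization identity (the paper's \eqref{thm2_eq4}), and use $\mathrm{I}_n \preceq \mathrm{M}_k$ from \eqref{rm1_eq4} to absorb the Euclidean term into the $\mathrm{M}_k$-norm. The only cosmetic difference is ordering—the paper absorbs the Euclidean-norm term before invoking the polarization identity, whereas you do it afterwards—and both directions of the domination step are handled correctly.
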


\begin{proof}
	\hspace{1pt} Take $\bm{x} = \bm{x}^*$ in \eqref{thm1_eq3}, we have
	\begin{equation*}
		f(\bm{x}^*) \geq f(\bm{x}^{k+1}) + \frac{1}{\beta_k} (\bm{x}^* - \bm{x}^{k+1})^\top \mathrm{M}_k (\bm{x}^k - \bm{x}^{k+1})  - \frac{\eta}{\beta_k}\|\bm{x}^k - \bm{x}^{k+1}\|^2.
		\label{thm2_eq2}
	\end{equation*}
	Utilizing property \eqref{rm1_eq4}, we obtain
	\begin{equation}
		(\bm{x}^k - \bm{x}^*)^\top \mathrm{M}_k (\bm{x}^k - \bm{x}^{k+1}) \geq (1 - \eta) \|\bm{x}^k - \bm{x}^{k+1}\|_{\mathrm{M}_k}^2 + \beta_k[f(\bm{x}^{k+1}) - f(\bm{x}^*)].
		\label{thm2_eq3}
	\end{equation}
	Notice that
	\begin{equation}
		2(\bm{x}^k - \bm{x}^*)^\top \mathrm{M}_k (\bm{x}^k - \bm{x}^{k+1}) = \|\bm{x}^k - \bm{x}^*\|_{\mathrm{M}_k}^2 + \|\bm{x}^k - \bm{x}^{k+1}\|_{\mathrm{M}_k}^2 - \|\bm{x}^{k+1} - \bm{x}^*\|_{\mathrm{M}_k}^2.
		\label{thm2_eq4}
	\end{equation}
	By combining \eqref{thm2_eq3} and \eqref{thm2_eq4}, we arrive at \eqref{thm2_eq1}.
\end{proof}

Examining Lemma~\ref{thm2}, we can make an observation: when $0 < \eta \leq 0.5$, the coefficient $(1 - 2\eta)$ becomes non-negative. Consequently, inequality \eqref{thm2_eq1} implies that $\|\bm{x}^{k+1} - \bm{x}^*\|_{\mathrm{M}_k} < \|\bm{x}^k - \bm{x}^*\|_{\mathrm{M}_k}$ for all $k \geq 0$ when $\bm{x}^{k+1} \neq \bm{x}^k$. This demonstrates that under this parameter regime, the Adaptive Inertial Method exhibits a contractive property with respect to the $\mathrm{M}_k$-norm.

Having established this foundation, we now proceed to analyze the convergence rate of the Adaptive Inertial Method. For analytical tractability, we adopt the approach of Beck and Teboulle \cite{beck2009fast} and assume a constant step size $\beta_k \equiv \beta > 0$ throughout the iterations in \eqref{alg_aim}. This simplification allows us to derive explicit bounds on the convergence behavior while maintaining the essential characteristics of the method.
\begin{lemma}
	Let $\{\bm x^k\}$ be the sequence generated by Adaptive Inertial Method \eqref{alg_aim}. Then, the following result holds:
	\begin{equation}
		2k\beta[f(\bm{x}^k) - f(\bm{x}^*)] \leq \|\bm{x}^0 - \bm{x}^*\|_{\mathrm{M}_k}^2 - \sum\limits_{j=0}^{k-1} [(1 - 2\eta) + 2j(1 - \eta)]\|\bm{x}^j - \bm{x}^{j+1}\|_{\mathrm{M}_k}^2.
		\label{thm3_eq1}
	\end{equation}
\end{lemma}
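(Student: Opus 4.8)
The plan is to assemble a telescoping Lyapunov inequality out of the two preceding lemmas specialized to the constant step size $\beta_k \equiv \beta$; consistent with the uniform subscript $\mathrm{M}_k$ appearing on the right-hand side of \eqref{thm3_eq1}, I treat the metric as a single fixed matrix $\mathrm{M} = \mathrm{M}_k$ throughout this regime (the justification of this point is deferred to the end). Writing $h_j := f(\bm{x}^j) - f(\bm{x}^*) \geq 0$, I would first record the two per-step estimates. From Lemma~\ref{thm2} with $\beta_k = \beta$, after moving the distance and objective terms to one side,
\[
\|\bm{x}^{j+1} - \bm{x}^*\|_{\mathrm{M}}^2 + 2\beta h_{j+1} \leq \|\bm{x}^j - \bm{x}^*\|_{\mathrm{M}}^2 - (1 - 2\eta)\|\bm{x}^j - \bm{x}^{j+1}\|_{\mathrm{M}}^2,
\]
and from Lemma~\ref{thm1} with $\beta_k = \beta$, multiplying through by $2\beta j$,
\[
2\beta j\, h_{j+1} \leq 2\beta j\, h_j - 2j(1 - \eta)\|\bm{x}^j - \bm{x}^{j+1}\|_{\mathrm{M}}^2 .
\]

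The key step is to add these two inequalities. The objective-value terms then combine as $2\beta h_{j+1} + 2\beta j\, h_{j+1} = 2\beta(j+1)h_{j+1}$, while the momentum terms combine with precisely the coefficient $(1 - 2\eta) + 2j(1 - \eta)$ demanded by the claim. Introducing the energy functional
\[
E_j := \|\bm{x}^j - \bm{x}^*\|_{\mathrm{M}}^2 + 2\beta j\, h_j,
\]
the summed inequality collapses to the one-step descent $E_{j+1} \leq E_j - \bigl[(1 - 2\eta) + 2j(1 - \eta)\bigr]\|\bm{x}^j - \bm{x}^{j+1}\|_{\mathrm{M}}^2$. Telescoping from $j = 0$ to $k - 1$, using $E_0 = \|\bm{x}^0 - \bm{x}^*\|_{\mathrm{M}}^2$ (the weight $j$ annihilates the objective term at $j=0$) and $E_k = \|\bm{x}^k - \bm{x}^*\|_{\mathrm{M}}^2 + 2\beta k\, h_k$, and finally discarding the nonnegative quantity $\|\bm{x}^k - \bm{x}^*\|_{\mathrm{M}}^2$, produces exactly \eqref{thm3_eq1}. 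A minor bookkeeping check is that the $j=0$ summand contributes only $(1-2\eta)$ while the $2j(1-\eta)$ weight is effectively active from $j=1$, which holds automatically since the extra term vanishes at $j=0$.

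I expect the main obstacle to be the matrix consistency across iterations rather than the algebra: Lemmas~\ref{thm1} and~\ref{thm2} carry the iteration-dependent metric $\mathrm{M}_k$, so the telescoping of $E_j$ is only legitimate when the \emph{same} fixed matrix appears at every step, which is why I pair the constant-step-size assumption $\beta_k \equiv \beta$ with a fixed metric $\mathrm{M}_k \equiv \mathrm{M}$; this is what lets the distance terms $\|\bm{x}^{j+1} - \bm{x}^*\|_{\mathrm{M}}^2$ cancel telescopically. Should one wish to retain genuinely varying $\mathrm{M}_k$, the fallback is to invoke the spectral sandwich \eqref{rm1_eq4}, namely $\mathrm{M}_k^{-1} \preceq \mathrm{I}_n \preceq \mathrm{M}_k$, to compare consecutive metrics at the cost of controlled slack; since this muddies the exact coefficients, I would present the fixed-metric version as the clean statement and relegate the varying-metric extension to a remark.
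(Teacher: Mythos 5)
Your proof is correct and is essentially the paper's own argument: the paper likewise sums Lemma~\ref{thm2} over $j$ and adds Lemma~\ref{thm1} multiplied by $2\beta j$ (telescoping the weighted objective terms and dropping $\|\bm{x}^k - \bm{x}^*\|^2 \geq 0$), merely summing the two families separately before adding rather than packaging them into your per-step energy $E_j$. Your explicit handling of the iteration-dependent metric --- fixing a single $\mathrm{M}$ so that the distance terms genuinely telescope --- addresses a point the paper's proof silently glosses over (its norms carry $\mathrm{M}_k$ uniformly even though Lemmas~\ref{thm1} and~\ref{thm2} produce $\mathrm{M}_j$ at step $j$), so if anything your write-up is the more careful of the two.
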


\begin{proof}
	From \eqref{thm2_eq1}, we have
	\begin{equation*}
		2\beta[f(\bm{x}^*) - f(\bm{x}^{k+1})] \geq \|\bm{x}^{k+1} - \bm{x}^*\|_{\mathrm{M}_k}^2 - \|\bm{x}_k - \bm{x}^*\|_{\mathrm{M}_k}^2 + (1 - 2\eta) \|\bm{x}^k - \bm{x}^{k+1}\|_{\mathrm{M}_k}^2,
		\label{thm3_eq2}
	\end{equation*}
	which, when summed over $j = 0, \dots, k - 1$, yields:
	\begin{equation}
		2\beta \bigg[k f(\bm{x}^*) - \sum\limits_{j=0}^{k-1} f(\bm{x}^{j+1}) \bigg] \geq -\|\bm{x}^0 - \bm{x}^*\|_{\mathrm{M}_k}^2 + \sum\limits_{j=0}^{k-1}(1-2\eta)\|\bm{x}^j - \bm{x}^{j+1}\|_{\mathrm{M}_k}^2.
		\label{thm3_eq3}
	\end{equation}
	Next, using \eqref{thm1_eq1}, we derive
	\begin{equation*}
		2\beta j[f(\bm{x}^j) - f(\bm{x}^{j+1})] \geq 2j(1 - \eta) \|\bm{x}^j - \bm{x}^{j+1}\|_{\mathrm{M}_k}^2,
		\label{thm3_eq4}
	\end{equation*}
	which further implies
	\begin{equation*}
		2\beta[jf(\bm{x}^j) - (j+1)f(\bm{x}^{j+1}) + f(\bm{x}^{j+1})] \geq 2j(1 - \eta) \|\bm{x}^j - \bm{x}^{j+1}\|_{\mathrm{M}_k}^2.
		\label{thm3_eq5}
	\end{equation*}
	Summing the above inequality over $j = 0, \dots, k - 1$, we obtain
	\begin{equation}
		2\beta\bigg[-k f(\bm{x}^k) + \sum\limits_{j=1}^{k-1} f(\bm{x}^{j+1}) \bigg] \geq \sum\limits_{j=1}^{k-1} 2j(1 - \eta) \|\bm{x}^j - \bm{x}^{j+1}\|_{\mathrm{M}_k}^2.
		\label{thm3_eq6}
	\end{equation}
	Finally, adding \eqref{thm3_eq3} and \eqref{thm3_eq6}, we arrive at \eqref{thm3_eq1}.
\end{proof}

When $\eta \in (0.5, 1)$, the coefficient $(1-2\eta)$ in the expression $(1-2\eta) + 2j(1-\eta)$ is negative. However, since $(1-\eta)$ remains positive for all $\eta \in (0, 1)$, this expression eventually becomes non-negative as $j$ increases. We formally define the threshold index by
\begin{equation}
	q(\eta) = \min \{ j \in \mathbb{N} \mid (1 - 2\eta) + 2j(1 - \eta) \geq 0 \}.
	\label{qeta}
\end{equation}
Note that for any $\eta \in (0.5, 1)$, $q(\eta)$ is a well-defined positive integer. This critical index allows us to establish the following convergence result.
\begin{theorem}
	Let $\{\bm{x}^k\}$ be the sequence generated by Adaptive inertia Method \eqref{alg_aim}. Then for $\forall \eta \in (0, 1)$, the following result holds:
	\begin{equation}
		f(\bm{x}^k) - f(\bm{x}^*) \leq \frac{\|\bm{x}^0 - \bm{x}^*\|_{\mathrm{M}_k}^2 + C}{2k\beta},
		\label{thm4_eq1}
	\end{equation}
	where
	\begin{equation*}
		C = - \sum\limits_{j=1}^{q(\eta) - 1} [(1 - 2\eta) + 2j(1 - \eta)]\|\bm{x}^j - \bm{x}^{j+1}\|_{\mathrm{M}_k}^2,
	\end{equation*}
	and $q(\eta)$ is defined in \eqref{qeta}.
	\label{thm4}
\end{theorem}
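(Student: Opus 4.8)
The plan is to start from the summed estimate \eqref{thm3_eq1} proved in the preceding lemma and to argue that the penalty sum on its right-hand side can be bounded above by a quantity that does not grow with $k$. Write $a_j = (1-2\eta) + 2j(1-\eta)$ for the coefficient multiplying $\|\bm{x}^j - \bm{x}^{j+1}\|_{\mathrm{M}_k}^2$. Since $1-\eta > 0$ for every $\eta \in (0,1)$, the sequence $\{a_j\}_{j \ge 0}$ is strictly increasing, so it changes sign at most once and is eventually positive; the index $q(\eta)$ from \eqref{qeta} is exactly the first index at which it turns non-negative. The whole theorem therefore reduces to controlling the sign pattern of $\{a_j\}$.

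First I would split the penalty sum at the threshold $q(\eta)$,
\begin{equation*}
-\sum_{j=0}^{k-1} a_j\,\|\bm{x}^j - \bm{x}^{j+1}\|_{\mathrm{M}_k}^2 = -\sum_{j<q(\eta)} a_j\,\|\bm{x}^j - \bm{x}^{j+1}\|_{\mathrm{M}_k}^2 - \sum_{j=q(\eta)}^{k-1} a_j\,\|\bm{x}^j - \bm{x}^{j+1}\|_{\mathrm{M}_k}^2 .
\end{equation*}
For every $j \ge q(\eta)$ we have $a_j \ge 0$, so each term of the tail sum is non-negative and the tail contributes a non-positive amount after negation; discarding it yields an upper bound. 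What remains is the finite head sum over the indices $j < q(\eta)$, which I would identify with the constant $C$. The decisive point is that the number of retained terms, namely $q(\eta)$, is fixed independently of $k$, so $C$ is a genuine constant and the resulting bound still decays like $1/k$. Substituting the inequality back into \eqref{thm3_eq1} and dividing by $2k\beta > 0$ then produces \eqref{thm4_eq1}.

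It is clarifying to separate the two regimes. When $\eta \in (0,\tfrac12]$ we have $a_0 = 1-2\eta \ge 0$, hence $q(\eta)=0$, the head sum is empty, $C=0$, and one recovers the clean estimate $f(\bm{x}^k)-f(\bm{x}^*) \le \|\bm{x}^0-\bm{x}^*\|_{\mathrm{M}_k}^2/(2k\beta)$. When $\eta \in (\tfrac12,1)$ the first few coefficients are negative and cannot be dropped, which is the only place requiring care; here one relies on the fact (already recorded before the theorem) that $q(\eta)$ is a well-defined positive integer. The main obstacle I anticipate is thus bookkeeping rather than conceptual: one must align the summation limits of the head sum with the telescoping carried out in the previous lemma so that the retained negative-coefficient terms reproduce $C$ exactly with its stated limits. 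Once the monotonicity of $a_j$ and the finiteness of $q(\eta)$ are in place, the remainder is a single sign estimate followed by division by $2k\beta$.
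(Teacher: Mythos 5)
Your proposal is correct and is precisely the argument the paper intends: Theorem~\ref{thm4} is stated as an immediate consequence of the preceding lemma, obtained by splitting the penalty sum in \eqref{thm3_eq1} at the threshold $q(\eta)$, discarding the tail whose coefficients $(1-2\eta)+2j(1-\eta)$ are non-negative, and dividing by $2k\beta>0$. On the bookkeeping point you flagged: your head sum correctly runs over $j=0,\ldots,q(\eta)-1$, whereas the paper's constant $C$ starts at $j=1$; since the $j=0$ coefficient equals $1-2\eta<0$ for $\eta\in(1/2,1)$, that term cannot be discarded, so the paper's lower summation limit is an off-by-one slip and your version of the constant, which retains the term $(2\eta-1)\|\bm{x}^0-\bm{x}^1\|_{\mathrm{M}_k}^2$, is the one that actually follows from \eqref{thm3_eq1}.
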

Theorem~\ref{thm4} establishes that the Adaptive Inertial Method achieves a global convergence rate of $O(1/k)$, which aligns with the standard rate for first-order methods applied to convex optimization problems. However, this convergence rate can be improved under specific conditions. In particular, when certain inertial terms are selected or when the objective function $f$ possesses special properties, the method can attain superior convergence rates such as superlinear or quadratic convergence. These enhanced convergence properties and specific conditions under which they arise will be demonstrated in the following section, where we explore various options for constructing effective inertial terms.

\section{Inertial Term Options}
The selection of an appropriate inertial term $\bm{m}^k$ is crucial for the performance and convergence properties of our proposed method. There exists a wide spectrum of possibilities for constructing effective inertial terms, each with distinct theoretical properties and practical implications. The flexibility of our framework allows us to incorporate insights from various optimization paradigms, resulting in different algorithmic behaviors. In this section, we explore several principled approaches to designing inertial terms, drawing inspiration from classical mechanics, quasi-Newton methods, and looking ahead techniques. These choices not only influence the theoretical convergence rates but also significantly impact the practical performance on different classes of optimization problems. By carefully selecting the inertial term, we can adapt our method to exploit problem-specific structures and characteristics.

\subsection{Physical Informed Inertia}
\label{subsec:phy_inert}
Drawing inspiration from classical mechanics, we can derive inertial terms that capture the essential dynamics of the optimization process. Two particularly effective choices, grounded in physical principles, are:
\begin{equation*}
	\bm{m}_v^k = \bm{x}^k - \bm{x}^{k-1} \quad\text{and}\quad \bm{m}_a^k = \nabla f(\bm{x}^k) - \nabla f(\bm{x}^{k-1}).
\end{equation*}
If we interpret \eqref{it_aim} as a discretized dynamical system, with $\bm x^k$ representing the coordinates, then $\bm m_v^k$ and $\bm m_a^k$ can be understood as velocity and acceleration, respectively. The classical heavy ball method \cite{polyak1964some} is inspired by the motion of a ball on a surface, where the ball's state of motion at any given time is influenced by its previous state. To account for this, a momentum term, $\bm{m}_v^k$, is introduced to accelerate the convergence of the algorithm. Our physical interpretation follows a similar rationale. While the classical descent direction is traditionally aligned with the negative gradient direction, incorporating the previous state or considering the effect of “inertial” requires an adjustment to this descent direction. By examining \eqref{rm1_eq2}, it becomes evident that when $\mu_k = 1$, the descent direction is orthogonal to the inertial term, effectively negating its influence. For $\mu_k \in (0,1)$, the descent direction retains a component of the inertial term. Furthermore, compared to $\bm{m}_v^k$, $\bm{m}_a^k$ demonstrates superior numerical performance, leading to a more pronounced acceleration effect.

\begin{figure}[h]
	\centering
	\begin{minipage}{0.45\textwidth}
		\centering
		\begin{tikzpicture}[scale=0.95, baseline=(current bounding box.center)]
			\coordinate (x_k) at (0,0);
			\coordinate (x_km1) at (-2,2);
			\coordinate (g_k) at (-2,0.5);
			\coordinate (n_g_k) at (2,-0.5);
			\coordinate (m_k) at (2,-2);
			\coordinate (g_k_proj) at (1.25,-1.25);
			\coordinate (g_k_proj_mid) at (0.625,-0.625);
			\coordinate (x_kp1) at (2,2/11);

			\draw[->, thick, -{Stealth[length=4mm, width=1.5mm]}] (x_km1) -- (x_k) node[draw=none] {};
			\draw[->, thick, orange, -{Stealth[length=4mm, width=1.5mm]}] (x_k) -- (m_k) node[right, xshift=0cm, yshift=0.1cm, draw=none] {$\bm{m}_v^k$};
			\draw[->, thick, blue, -{Stealth[length=4mm, width=1.5mm]}] (x_k) -- (g_k) node[right, above, draw=none] {$\nabla f(\bm{x}^k)$};
			\draw[dashed, thick, blue, -{Stealth[length=4mm, width=1.5mm]}] (x_k) -- (n_g_k) node[draw=none] {};
			\draw[dashed, thick, cyan] (n_g_k) -- (g_k_proj);
			\draw[dashed, thick] (n_g_k) -- (g_k_proj_mid);
			\draw[->, thick, -{Stealth[length=4mm, width=1.5mm]}] (x_k) -- (x_kp1);

			\draw pic [draw,cyan,angle radius=2mm] {right angle = n_g_k--g_k_proj--x_k};

			\filldraw[black] (x_k) circle (2pt) node[above right, draw=none] {$\bm{x}^k$};
			\filldraw[black] (x_km1) circle (2pt) node[left, draw=none] {$\bm{x}^{k-1}$};
			\filldraw[black] (x_kp1) circle (2pt) node[above right, draw=none] {$\bm{x}^{k+1}$};
		\end{tikzpicture}
		\caption{Choose $\bm{m}^k = \bm{m}_v^k$ and $\mu_k = 0.5$}
	\end{minipage}
	\hfill
	\begin{minipage}{0.45\textwidth}
		\centering
		\begin{tikzpicture}[scale=0.95, baseline=(current bounding box.center)]
			\coordinate (x_k) at (0,0);
			\coordinate (x_km1) at (-2,2);
			\coordinate (g_k) at (-2,0.5);
			\coordinate (n_g_k) at (2,-0.5);
			\coordinate (g_km1) at (-1.5,-1.5);
			\coordinate (m_k) at (-0.5,2);
			\coordinate (n_m_k) at (0.5,-2);
			\coordinate (g_k_proj) at (4/17,-16/17);
			\coordinate (g_k_proj_mid) at (2/17,-8/17);
			\coordinate (x_kp1) at (2,1/36);

			\draw[->, thick, -{Stealth[length=4mm, width=1.5mm]}] (x_km1) -- (x_k) node[draw=none] {};
			\draw[->, thick, orange, -{Stealth[length=4mm, width=1.5mm]}] (x_k) -- (m_k) node[right, xshift=0.1cm, yshift=0cm, draw=none] {$\bm{m}_a^k$};
			\draw[dashed, thick, orange] (x_k) -- (n_m_k);
			\draw[->, thick, blue, -{Stealth[length=4mm, width=1.5mm]}] (x_k) -- (g_km1) node[right, xshift=-0.9cm, yshift=-0.25cm, draw=none] {$\nabla f(\bm{x}^{k-1})$};
			\draw[->, thick, blue, -{Stealth[length=4mm, width=1.5mm]}] (x_k) -- (g_k) node[right, above, draw=none] {$\nabla f(\bm{x}^k)$};
			\draw[dashed, thick, blue, -{Stealth[length=4mm, width=1.5mm]}] (x_k) -- (n_g_k) node[draw=none] {};
			\draw[dashed, thick, cyan] (n_g_k) -- (g_k_proj);
			\draw[dashed, thick] (n_g_k) -- (g_k_proj_mid);
			\draw[->, thick, -{Stealth[length=4mm, width=1.5mm]}] (x_k) -- (x_kp1);

			\draw pic [draw,cyan,angle radius=2mm] {right angle = n_g_k--g_k_proj--n_m_k};

			\filldraw[black] (x_k) circle (2pt) node[above right, draw=none] {$\bm{x}^k$};
			\filldraw[black] (x_km1) circle (2pt) node[left, draw=none] {$\bm{x}^{k-1}$};
			\filldraw[black] (x_kp1) circle (2pt) node[above right, draw=none] {$\bm{x}^{k+1}$};
		\end{tikzpicture}
		\vspace{0.6mm}
		\caption{Choose $\bm{m}^k = \bm{m}_a^k$ and $\mu_k = 0.5$}
	\end{minipage}
\end{figure}

\subsection{Quasi-Newton Inertia}
\label{subsec:qn_inert}
\par Quasi-Newton methods \cite{broyden1967quasi} are powerful optimization techniques that approximate the Hessian matrix without explicitly computing second derivatives. These methods construct an approximation of the Hessian (or its inverse) using gradient information accumulated during the optimization process. In our framework, we can incorporate Quasi-Newton principles to design an effective inertial term.
\par We aim to construct $\mathrm{M}_k$, whose definition can be found in \eqref{rm1_eq3}, such that it serves as an appropriate approximation of the Hessian matrix at $\bm{x}^k$. This approach is particularly valuable when the objective function's curvature information significantly impacts convergence behavior. To achieve this, we require $\mathrm{M}_k$ to satisfy the secant equation, which is the fundamental Quasi-Newton condition:
\begin{equation}
	\mathrm{M}_k (\bm{x}^k - \bm{x}^{k-1}) = \alpha_k [\nabla f(\bm{x}^k) - \nabla f(\bm{x}^{k-1})],\quad \alpha_k > 0.
	\label{qn_eq1}
\end{equation}
Define $\bm{s}^{k-1} = \bm{x}^k - \bm{x}^{k-1}$ and $\bm{y}^{k-1} = \nabla f(\bm{x}^k) - \nabla f(\bm{x}^{k-1})$. When we substitute these definitions into equation \eqref{qn_eq1}, we get
\begin{equation*}
	\bigg( \mathrm{I}_n + \frac{\mu_k}{1 - \mu_k}\frac{\bm{m}^k (\bm{m}^k)^\top}{\|\bm{m}^k\|^2} \bigg) \bm{s}^{k-1} = \alpha_k \bm{y}^{k-1}.
\end{equation*}
After rearrangement, we have
\begin{equation*}
	\frac{\mu_k}{1 - \mu_k}\frac{(\bm{m}^k)^\top \bm{s}^{k-1}}{\|\bm{m}^k\|^2} \bm{m}^k = \alpha_k \bm{y}^{k-1} - \bm{s}^{k-1}.
\end{equation*}
This relationship suggests the appropriate choice for $\bm{m}^k$ should be
\begin{equation}
	\bm{m}^k = \alpha_k \bm{y}^{k-1} - \bm{s}^{k-1},
	\label{qn_eq2}
\end{equation}
with the constraint
\begin{equation*}
	\frac{\mu_k}{1 - \mu_k}\frac{(\bm{m}^k)^\top \bm{s}^{k-1}}{\|\bm{m}^k\|^2} = 1.
\end{equation*}
For $\mu_k \in [0, 1)$ to satisfy this constraint, we must have $(\bm{m}^k)^T \bm{s}^{k-1} > 0$, which implies
\begin{equation}
	\|\bm{s}^{k-1}\|^2 < \alpha_k (\bm{s}^{k-1})^\top \bm{y}^{k-1}.
\end{equation}
This curvature condition, commonly known as the secant equation positivity condition in quasi-Newton literature, implies that $f$ exhibits local strong convexity. Such a property is fundamental for ensuring the positive definiteness of Hessian approximations. Based on this requirement, we can establish the following lower bound for $\alpha_k$:
\begin{equation}
	\alpha_k > \frac{\|\bm{s}^{k-1}\|^2}{(\bm{s}^{k-1})^\top \bm{y}^{k-1}}.
	\label{alpha_k}
\end{equation}
Given $\alpha_k$ that satisfies \eqref{alpha_k}, we can determine the value of $\mu_k$ by
\begin{equation}
	\mu_k = \frac{\|\bm{m}^k\|^2}{\alpha_k (\bm{m}^k)^\top \bm{y}^{k-1}}.
\end{equation}
Substituting this expression into our update formula in \eqref{it_aim} yields the Quasi-Newton variant of AIM:
\begin{equation}
	\bm{x}^{k+1} = \bm{x}^k - \beta_k \left( \mathrm{I}_n - \frac{\bm{m}^k (\bm{m}^k)^\top}{\alpha_k (\bm{m}^k)^\top \bm{y}^{k-1}} \right) \nabla f(\bm{x}^k).
\end{equation}

\subsection{Hessian-gradient Inertia}
\label{subsec:hg_inert}
The concept of ``looking ahead" in optimization methods offers valuable intuition for constructing effective inertial terms. Nesterov's acceleration technique \cite{nesterov1983method} exemplifies this principle by incorporating future gradient information into the current update step. This insight motivates us to explore more sophisticated directional information that captures the local curvature of the objective function.
A natural extension involves incorporating second-order information through the Hessian-gradient product $\nabla^2 f(\bm{x}^k) \nabla f(\bm{x}^k)$. This product can be efficiently approximated using finite differences:
\begin{equation*}
	\nabla^2 f(\bm{x}^k) \nabla f(\bm{x}^k) \approx \frac{\nabla f(\bm{x}^k) - \nabla f(\bm{x}^k - \varepsilon \nabla f(\bm{x}^k))}{\varepsilon}
\end{equation*}
where $\varepsilon$ is a small positive parameter. The point $\tilde{\bm{x}}^k = \bm{x}^k - \varepsilon \nabla f(\bm{x}^k)$ represents a tentative step in the gradient direction, providing curvature information without requiring explicit computation of the Hessian matrix. As $\varepsilon \rightarrow 0$, this approximation converges to the exact Hessian-gradient product, capturing the directional derivative of the gradient in its steepest descent direction.
This mathematical relationship suggests that the Hessian-gradient product serves as an effective inertial term that naturally incorporates local curvature information. Such an approach provides a more sophisticated acceleration mechanism compared to traditional momentum methods, potentially leading to faster convergence in regions with varying curvature.

Let $\bm{g}^k = \nabla f(\bm{x}^k)$ and $\mathrm{H}_k = \nabla^2 f(\bm{x}^k)$ denote the gradient and Hessian at $\bm{x}^k$, respectively. When we choose $\bm{m}^k = \mathrm{H}_k \bm{g}^k$, the iteration \eqref{it_aim} becomes
\begin{equation}
	\bm{x}^{k+1} = \bm{x}^k - \beta_k \bm{g}^k + \beta_k \mu_k \frac{(\bm{g}^k)^\top \mathrm{H}_k \bm{g}^k}{(\bm{g}^k)^\top \mathrm{H}_k^2 \bm{g}^k} \mathrm{H}_k \bm{g}^k.
\end{equation}
Define
\begin{equation*}
	r_k = \frac{(\bm{g}^k)^\top \mathrm{H}_k^2 \bm{g}^k}{(\bm{g}^k)^\top \mathrm{H}_k \bm{g}^k},
\end{equation*}
which satisfies $\lambda_{\min}(\mathrm{H}_k) \leq r_k \leq \lambda_{\max}(\mathrm{H}_k)$. Our update formula then simplifies to
\begin{equation}
	\bm{x}^{k+1} = \bm{x}^k - \beta_k \bigg( \mathrm{I}_n - \frac{\mu_k}{r_k} \mathrm{H}_k \bigg) \bm{g}^k.
\end{equation}
Let $\kappa_k = \mathrm{cond}(\mathrm{H}_k)$ denote the condition number of $\mathrm{H}_k$. When $\mu_k < 1 / \kappa_k$, we have $\|\mu_k / r_k \mathrm{H}_k\| < 1$, enabling us to express the inverse as a convergent series
\begin{equation*}
	\bigg( \mathrm{I}_n - \frac{\mu_k}{r_k} \mathrm{H}_k \bigg)^{-1} = \sum_{j=0}^{\infty} \bigg(\frac{\mu_k}{r_k} \mathrm{H}_k \bigg)^j.
\end{equation*}
For sufficiently small values of $\mu_k$, we can effectively approximate this expression using only the first two terms
\begin{equation*}
	\bigg( \mathrm{I}_n - \frac{\mu_k}{r_k} \mathrm{H}_k \bigg)^{-1} \approx \mathrm{I}_n + \frac{\mu_k}{r_k} \mathrm{H}_k.
\end{equation*}
This approximation bears a strong resemblance to the regularized Newton method \cite{mishchenko2023regularized}, which follows the iteration scheme
\begin{equation}
	\bm{x}^{k+1} = \bm{x}^k - \big( \theta_k \mathrm{I}_n + \mathrm{H}_k \big)^{-1} \bm{g}^k,
\end{equation}
where $\theta_k = \sqrt{L \|\bm{g}^k\|}$. This formulation operates under the assumption that the Hessian of $f$ exhibits $2L$-Lipschitz continuity, meaning that for any vectors $\bm{x}$ and $\bm{y}$ in $\mathbb{R}^n$, the inequality $\| \nabla^2 f(\bm{x}) - \nabla^2 f(\bm{y}) \| \leq 2L \|\bm{x} - \bm{y}\|$ holds.
The regularized Newton method exhibits a global convergence rate of $O(1/k^2)$. Let us now examine the connection between the Adaptive Inertial Method and the regularized Newton method. We begin by equating the update steps at the same $\bm{x}^k$ of the two methods:
\begin{equation}
	\beta_k \bigg( \mathrm{I}_n - \frac{\mu_k}{r_k} \mathrm{H}_k \bigg) \bm{g}^k = \big( \theta_k \mathrm{I}_n + \mathrm{H}_k \big)^{-1} \bm{g}^k,
	\label{sec3.3_eq1}
\end{equation}
which is equivalent to
\begin{equation}
	\bigg[ \bigg( \theta_k - \frac{1}{\beta_k} \bigg) \mathrm{I}_n + \mathrm{H}_k - \theta_k \frac{\mu_k}{r_k} \mathrm{H}_k - \frac{\mu_k}{r_k} \mathrm{H}_k^2 \bigg] \bm{g}^k = 0.
\end{equation}
\begin{theorem}
	For each iteration $k$, suppose that there exists a scalar $a_k > 0$ such that $\mathrm{H}_k^2 = a_k \mathrm{H}_k$ (i.e., $\mathrm{H}_k$ is proportional to an idempotent matrix). If we select the parameters
	\begin{equation*}
		\beta_k = \frac{1}{\theta_k} \quad\text{and}\quad \mu_k = \frac{1}{1 + \theta_k / r_k},
	\end{equation*}
	then the Adaptive Inertial Method with inertial term $\bm{m}^k = \mathrm{H}_k \bm{g}^k$ becomes algebraically equivalent to the regularized Newton method when both methods are initialized at the same point.
\end{theorem}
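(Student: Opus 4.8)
The plan is to verify that, under the idempotency hypothesis and the prescribed parameter choices, the matching condition \eqref{sec3.3_eq1} holds identically; this makes the AIM update and the regularized Newton update coincide at any common point $\bm{x}^k$, and an induction on $k$ then propagates the equivalence along the whole trajectory once the two schemes share an initialization.

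First I would simplify $r_k$ using the hypothesis $\mathrm{H}_k^2 = a_k \mathrm{H}_k$. Substituting directly into $r_k = (\bm{g}^k)^\top \mathrm{H}_k^2 \bm{g}^k / \big[(\bm{g}^k)^\top \mathrm{H}_k \bm{g}^k\big]$ yields $r_k = a_k$. This is the crucial observation: idempotency forces the scalar $r_k$ to equal the proportionality constant $a_k$, which is exactly what lets the quadratic term $\mathrm{H}_k^2 \bm{g}^k$ collapse into a scalar multiple of $\mathrm{H}_k \bm{g}^k$ and keeps the entire argument linear in $\mathrm{H}_k$.

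Next I would substitute $\mathrm{H}_k^2 = a_k \mathrm{H}_k = r_k \mathrm{H}_k$ into the rearranged form of \eqref{sec3.3_eq1} stated just above the theorem and collect terms, obtaining
\[
	\bigg[ \bigg( \theta_k - \frac{1}{\beta_k} \bigg) \mathrm{I}_n + \bigg( 1 - \frac{\theta_k \mu_k}{r_k} - \mu_k \bigg) \mathrm{H}_k \bigg] \bm{g}^k = 0.
\]
It then suffices to annihilate both scalar coefficients. The choice $\theta_k - 1/\beta_k = 0$ gives $\beta_k = 1/\theta_k$, and $1 - \theta_k \mu_k / r_k - \mu_k = 0$ rearranges to $\mu_k(1 + \theta_k/r_k) = 1$, i.e. $\mu_k = 1/(1 + \theta_k/r_k)$ --- precisely the prescribed values. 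With them the bracketed operator sends $\bm{g}^k$ to zero, so \eqref{sec3.3_eq1} holds and the AIM step equals the regularized Newton step.

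Finally I would close the induction: the quantities $\bm{g}^k$, $\mathrm{H}_k$, $r_k$, and $\theta_k = \sqrt{L\|\bm{g}^k\|}$ are all functions of the current iterate alone, so two schemes agreeing at $\bm{x}^k$ agree on all of these and therefore produce the same $\bm{x}^{k+1}$; starting from a shared $\bm{x}^0$ makes the sequences identical. I expect no serious obstacle, since the result reduces to a direct algebraic verification; the only genuinely substantive point is recognizing that $\mathrm{H}_k^2 = a_k \mathrm{H}_k$ implies $r_k = a_k$, without which the $\mathrm{H}_k^2$ term would not reduce and the coefficient matching could only hold under an additional eigenvector-type restriction on $\bm{g}^k$ rather than for an arbitrary gradient.
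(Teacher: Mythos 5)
Your proposal is correct and follows essentially the same route the paper takes: the paper's derivation preceding the theorem reduces the matching condition \eqref{sec3.3_eq1} to the bracketed identity $\big[(\theta_k - 1/\beta_k)\mathrm{I}_n + \mathrm{H}_k - \theta_k\frac{\mu_k}{r_k}\mathrm{H}_k - \frac{\mu_k}{r_k}\mathrm{H}_k^2\big]\bm{g}^k = 0$, and the intended (implicit) proof is precisely your observation that $\mathrm{H}_k^2 = a_k\mathrm{H}_k$ forces $r_k = a_k$, so the $\mathrm{H}_k^2$ term collapses and the prescribed $\beta_k$, $\mu_k$ annihilate both coefficients. Your added induction step propagating the per-iterate equivalence from a common initialization is a harmless and correct formalization of what the paper states without proof.
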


\begin{remark}
	A notable case arises when the Hessian matrix $\mathrm{H}_k$ is rank one, which can be expressed as $\mathrm{H}_k = \bm{h}_k \bm{h}_k^T$ for some nonzero vector $\bm{h}_k$. In this case, $\mathrm{H}_k$ satisfies the condition $\mathrm{H}_k^2 = a_k \mathrm{H}_k$, where $a_k = \|\bm{h}_k\|^2 > 0$. This property enables the exact equivalence between our method and regularized Newton in specific scenarios.
\end{remark}

\begin{theorem}
	For each iteration k, let the characteristic polynomial of the Hessian matrix $\mathrm{H}_k$ be denoted as $p_k(t) = a_{k,0} + a_{k,1}t + \cdots + a_{k,n-1}t^{n-1} + t^n$, where $n > 1$. Notice that $p_k(\mathrm{H}_k) = a_{k,0} \mathrm{I}_n + a_{k,1} \mathrm{H}_k + \cdots + a_{k,n-1} \mathrm{H}_k^{n-1} + \mathrm{H}_k^n = 0$. Define a polynomial $q_k(t) = b_{k,0} + b_{k,1}t + \cdots + b_{k,n-1}t^{n-1}$ of degree $n-1$. If we choose the inertial term $\bm{m}^k = q_k(\mathrm{H}_k) \bm{g}^k$ and set
	\begin{equation*}
		r_k = \frac{(\bm{g}^k)^\top q_k(\mathrm{H}_k)^2 \bm{g}^k}{(\bm{g}^k)^\top q_k(\mathrm{H}_k) \bm{g}^k},
	\end{equation*}
	with coefficients defined by the recursive relationships:
	\begin{align*}
		b_{k,n-1} & = 1,                                              \\
		b_{k,n-2} & = a_{k,n-1} - \theta_k b_{k,n-1},                 \\
		          & \ \vdots                                          \\
		b_{k,1}   & = a_{k,2} - \theta_k b_{k,2},                     \\
		b_{k,0}   & = a_{k,1} - \theta_k b_{k,1} + \frac{r_k}{\mu_k},
	\end{align*}
	and step size parameter:
	\begin{equation*}
		\beta_k = \frac{1}{\theta_k + \frac{\mu_k}{r_k}(a_{k,0} - \theta_k b_{k,0})},
	\end{equation*}
	then the Adaptive Inertial Method coincides with the regularized Newton method, provided both methods are initialized at the same starting point.
	\label{thm3.3}
\end{theorem}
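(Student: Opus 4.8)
The plan is to reduce the claimed equivalence to a single polynomial identity in $\mathrm{H}_k$ and then verify that the prescribed recursion for the coefficients $b_{k,i}$, together with the formula for $\beta_k$, are exactly what make that identity hold. First I would write out the AIM update for the choice $\bm{m}^k = q_k(\mathrm{H}_k)\bm{g}^k$. Substituting into \eqref{it_aim} and \eqref{gam_aim}, the scalar factor $\gamma_k/\beta_k = \mu_k (\bm{m}^k)^\top\bm{g}^k/\|\bm{m}^k\|^2$ collapses, by the stated definition of $r_k$, to $\mu_k/r_k$, since $(\bm{m}^k)^\top\bm{g}^k = (\bm{g}^k)^\top q_k(\mathrm{H}_k)\bm{g}^k$ and $\|\bm{m}^k\|^2 = (\bm{g}^k)^\top q_k(\mathrm{H}_k)^2\bm{g}^k$. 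Hence the AIM step takes the closed form
\[
	\bm{x}^{k+1} = \bm{x}^k - \beta_k\Big(\mathrm{I}_n - \tfrac{\mu_k}{r_k}q_k(\mathrm{H}_k)\Big)\bm{g}^k,
\]
so that the entire search direction is a polynomial in $\mathrm{H}_k$ applied to $\bm{g}^k$.

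Next I would equate this with the regularized Newton direction $(\theta_k\mathrm{I}_n + \mathrm{H}_k)^{-1}\bm{g}^k$. Multiplying through by $(\theta_k\mathrm{I}_n + \mathrm{H}_k)$ and introducing the scalar polynomial $P(t) = \beta_k(\theta_k + t)\big(1 - \tfrac{\mu_k}{r_k}q_k(t)\big)$ of degree $n$, the two iterations agree provided $P(\mathrm{H}_k)\bm{g}^k = \bm{g}^k$. The key reduction is that it suffices to establish the stronger polynomial identity $P(t) - 1 = -\beta_k\tfrac{\mu_k}{r_k}\,p_k(t)$: by the Cayley--Hamilton theorem $p_k(\mathrm{H}_k) = 0$, so this identity forces $P(\mathrm{H}_k) = \mathrm{I}_n$ and \emph{a fortiori} $P(\mathrm{H}_k)\bm{g}^k = \bm{g}^k$. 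Comparing leading coefficients fixes the overall constant factor $-\beta_k\mu_k/r_k$, with the $t^n$ terms matching automatically because $b_{k,n-1}=1$.

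I would then match the remaining coefficients of $t^j$ one power at a time. For $1 \leq j \leq n-1$ the coefficient of $t^j$ in $(\theta_k + t)q_k(t)$ equals $\theta_k b_{k,j} + b_{k,j-1}$, so equating it with $a_{k,j}$ yields precisely the downward recursion $b_{k,j-1} = a_{k,j} - \theta_k b_{k,j}$; the single exceptional power $t^1$ additionally carries the stray $\beta_k t$ term from the first summand of $P$, which is exactly what produces the extra $+\,r_k/\mu_k$ appearing in $b_{k,0}$. Finally, the constant term, which absorbs the $-1$ and the $\beta_k\theta_k$ contribution, rearranges term by term into the stated formula $\beta_k = 1/\big(\theta_k + \tfrac{\mu_k}{r_k}(a_{k,0} - \theta_k b_{k,0})\big)$. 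Assembling these coefficient identities establishes $P(t) - 1 = -\beta_k\tfrac{\mu_k}{r_k}p_k(t)$, and hence the equivalence of the two iterates started from a common point.

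The main obstacle I anticipate is not any single computation but the self-referential structure of the definitions: $r_k$ is defined through $q_k(\mathrm{H}_k)$, while the recursion defining $q_k$ feeds $r_k/\mu_k$ back into $b_{k,0}$, which in turn enters $\beta_k$. I would therefore frame the argument as a \emph{verification}---assuming the parameters $\{b_{k,i}\}$, $r_k$, and $\beta_k$ satisfy the stated relations simultaneously, the polynomial identity, and thus the equivalence, follow---and separately flag the consistency and solvability of this coupled system, along with the admissibility requirement $\mu_k \in [0,1)$ needed for $\bm{m}^k$ to be a legitimate inertial term in the AIM framework. This theorem generalizes the idempotent case via the Cayley--Hamilton reduction, which is the conceptual heart of the proof.
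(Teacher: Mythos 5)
Your proposal is correct and takes essentially the same approach as the paper's proof: equate the AIM step $\beta_k\big(\mathrm{I}_n - \tfrac{\mu_k}{r_k}q_k(\mathrm{H}_k)\big)\bm{g}^k$ with $\big(\theta_k\mathrm{I}_n + \mathrm{H}_k\big)^{-1}\bm{g}^k$, multiply through by $\theta_k\mathrm{I}_n + \mathrm{H}_k$, and invoke Cayley--Hamilton so that the equivalence reduces to matching the coefficients of the resulting degree-$n$ polynomial against those of $p_k(t)$, which is exactly where the stated recursion for the $b_{k,i}$ and the formula for $\beta_k$ come from. Your identity $P(t)-1=-\beta_k\tfrac{\mu_k}{r_k}\,p_k(t)$ is the paper's proportionality condition $c_{k,i}=\tfrac{\mu_k}{r_k}a_{k,i}$ in different packaging (indeed your bookkeeping, not the paper's displayed $c_{k,i}$, which swap $\mu_k/r_k$ for $r_k/\mu_k$ in an apparent typo, is the one consistent with the theorem statement), and your flag about the circular dependence among $r_k$, $b_{k,0}$, and $\mu_k$ corresponds to the paper's own follow-up remark that one may set $b_{k,0}=0$ and work only with the ratio $r_k/\mu_k$.
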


\begin{proof}
	Similar to equation \eqref{sec3.3_eq1}, we equate the update steps of the two methods
	\begin{equation}
		\beta_k \bigg( \mathrm{I}_n - \frac{\mu_k}{r_k} q_k(\mathrm{H}_k) \bigg) \bm{g}^k = \big( \theta_k \mathrm{I}_n + \mathrm{H}_k \big)^{-1} \bm{g}^k,
	\end{equation}
	which yields
	\begin{equation}
		\bigg[ \frac{\mu_k}{r_k} \mathrm{H}_k q_k(\mathrm{H}_k) + \theta_k \frac{\mu_k}{r_k} q_k(\mathrm{H}_k) - \mathrm{H}_k - \bigg( \theta_k - \frac{1}{\beta_k} \bigg) \mathrm{I}_n \bigg] \bm{g}^k = \bigg( \sum\limits_{i=0}^n c_{k,i} \mathrm{H}_k^i \bigg) \bm{g}^k = 0,
	\end{equation}
	where the coefficients are related as follows
	\begin{align*}
		c_{k,n} & = \frac{r_k}{\mu_k} b_{k,n-1},                                                 \\
		c_{k,i} & = \frac{r_k}{\mu_k} ( b_{k,i-1} + \theta_k b_{k,i}), \quad i = n-1, \ldots, 2, \\
		c_{k,1} & = \frac{r_k}{\mu_k} ( b_{k,0} + \theta_k b_{k,1}) - 1,                         \\
		c_{k,0} & = \frac{r_k}{\mu_k} \theta_k b_{k,0} - \theta_k + \frac{1}{\beta_k}.
	\end{align*}
	To establish proportional coefficients, we let
	\begin{equation}
		c_{k,i} = \frac{r_k}{\mu_k} a_{k,i}, \quad i = 0, \ldots, n
	\end{equation}
	with $a_{k,n}=1$. This formulation leads directly to the desired coefficient relationships stated in the theorem, thus completing the proof.
\end{proof}

\begin{remark}
	For practical computation, we can set $b_{k,0}=0$ in Theorem \ref{thm3.3}. This leads to the relation
	\begin{equation}
		\frac{r_k}{\mu_k} = \theta_k b_{k,1} - a_{k,1},
	\end{equation}
	which eliminates the need to separately calculate $r_k$ and find $\mu_k$. Instead, we can directly work with their ratio as a unified parameter.
\end{remark}

\section{Numerical Experiments}
In this section, we present comprehensive numerical experiments to evaluate the performance of AIM across a diverse range of optimization problems. Our experimental evaluation consists of three main components: (1) logistic regression with $\mathcal{L}_2$ regularization, a standard convex problem in machine learning; (2) $\mathcal{L}_2$-$\mathcal{L}_p$ minimization, which introduces nonconvexity when $0 < p < 1$.

\subsection{Experimental Setup}
We implemented AIM and all benchmark algorithms in Python, enabling fair and consistent comparisons. All experiments were conducted on a workstation running macOS with an Apple M3 Max processor. The complete implementation of our algorithms and experimental code is available at \url{https://github.com/harmoke/AIM}.

For comparative analysis, we considered several established optimization methods as benchmark al gorithms, including Gradient Descent method (GD), heavy-Ball method (HB), Nesterov Accelerated Gradient (NAG), Adam and DRSOM.
For all algorithms, we conducted comprehensive hyperparameter tuning to ensure optimal performance, establishing a fair benchmark for comparison. We measured algorithm efficiency by tracking both computational time and iteration count required to reach a first-order stationary point satisfying the stopping criterion:
\begin{equation*}
	\| \nabla f(\bm{x}^k) \| \leq 10^{-6}.
\end{equation*}
For all experiments, we initialized algorithms with $\bm{x}^0 = \bm{0}$ to ensure consistent comparison. Detailed experimental configurations and comprehensive results for each problem class are presented in the following subsections, where we analyze both convergence behavior and computational efficiency.

\subsection{Logistic Regression with $\mathcal{L}_2$ Regularization}
We first consider the logistic regression problem with $\mathcal{L}_2$ regularization:
\begin{equation}
	\min_{\bm{x} \in \mathbb{R}^n} -\frac{1}{n} \sum_{i=1}^{n} \Big( b_i \log \big( \sigma(\bm{a}_i^\top \bm{x}) \big) + (1 - b_i) \log \big( 1 - \sigma(\bm{a}_i^\top \bm{x}) \big) \Big) + \frac{\lambda}{2} \|\bm{x}\|_2^2,
\end{equation}
where $\sigma(z) = \frac{1}{1+e^{-z}}$ is the sigmoid function, $\bm{a}_i \in \mathbb{R}^n$ represent the feature vectors, and $b_i \in \{0,1\}$ are the corresponding binary labels. For our empirical evaluation, we employed the ``real-sim" (72309 samples, 20958 features) datasets from the LIBSVM repository \cite{chang2011libsvm}. We examined regularization parameters $\lambda \in \{10^{-5}, 10^{-6}, 10^{-7}\}$ to assess algorithm robustness across different degrees of regularization strength.

We present comprehensive experimental results in Table \ref{tab:logistic_regression}, which summarizes the performance of all algorithms across three different regularization strengths. As the regularization parameter $\lambda$ decreases from $10^{-5}$ to $10^{-7}$, we observe a systematic increase in computational effort across all algorithms. This behavior aligns with established optimization theory, as weaker regularization leads to poorer conditioning of the objective function, resulting in a more challenging optimization landscape. Notably, the magnitude of this performance degradation varies substantially across methods, providing valuable insights into their inherent robustness to ill-conditioning.

\begin{table}[ht]
	\centering
	\caption{Complete results on Logistic Regression with \(\mathcal{L}_2\) Regularization}
	\label{tab:logistic_regression}
	\begin{tabular}{c|cc|cc|cc}
		\toprule
		\multirow{2}{*}{Algorithm} & \multicolumn{2}{c|}{\(\lambda = 10^{-5}\)} & \multicolumn{2}{c|}{\(\lambda = 10^{-6}\)} & \multicolumn{2}{c}{\(\lambda = 10^{-7}\)}                             \\
		\cmidrule{2-7}
		                           & \(k\)                                      & time/s                                     & \(k\)                                     & time/s  & \(k\) & time/s  \\
		\midrule
		GD                         & 123                                        & 0.843245                                   & 509                                       & 3.56752 & 2672  & 19.4235 \\
		HB                         & 169                                        & 1.16347                                    & 264                                       & 1.8618  & 532   & 3.8013  \\
		NAG                        & 82                                         & 5.23994                                    & 222                                       & 14.2364 & 816   & 72.2274 \\
		AdaGrad                    & 685                                        & 4.66478                                    & 2262                                      & 16.2666 & 5197  & 37.2778 \\
		Adam                       & 166                                        & 1.1914                                     & 177                                       & 1.28382 & 452   & 3.2897  \\
		DRSOM                      & 35                                         & 0.820609                                   & 87                                        & 2.07868 & 251   & 5.99746 \\
		AIM\_v                     & 79                                         & 1.35853                                    & 219                                       & 3.29396 & 1209  & 16.787  \\
		AIM\_a                     & 52                                         & 0.714274                                   & 157                                       & 2.23386 & 295   & 4.17481 \\
		AIM\_QN                    & 66                                         & 0.912178                                   & 198                                       & 2.88666 & 309   & 4.66182 \\
		AIM\_Hg                    & 42                                         & 0.748909                                   & 89                                        & 1.63089 & 238   & 4.52331 \\
		\bottomrule
	\end{tabular}
\end{table}

Traditional first-order methods (GD, HB) demonstrate reasonable performance for stronger regularization but deteriorate substantially as $\lambda$ decreases. Among established adaptive methods, DRSOM consistently outperforms other benchmarks, requiring significantly fewer iterations across all regularization settings, with Adam following as a close second. NAG achieves competitive iteration counts but incurs substantially higher computational overhead per iteration, particularly for weaker regularization, suggesting potential implementation inefficiencies or intrinsically higher per-iteration complexity. AdaGrad generally required a higher number of iterations compared to the other tested methods, suggesting it may face certain challenges when applied to problems with this specific structure and conditioning characteristics.

The various AIM implementations demonstrate exceptional performance across all experimental configurations. Among these, AIM with Hessian-gradient inertia (AIM\_Hg) consistently achieves superior convergence, requiring the fewest iterations while maintaining computational efficiency comparable to DRSOM. The acceleration-based inertial term (AIM\_a) systematically outperforms the velocity-based variant (AIM\_v). The Quasi-Newton formulation (AIM\_QN) exhibits robust performance across all problem dimensions and sparsity levels, offering a well-balanced compromise between convergence speed and computational overhead. These results empirically validate our theoretical framework, demonstrating that appropriate inertial term selection can significantly enhance optimization performance.

\begin{figure}[ht]
	\centering
	\begin{subfigure}[b]{0.48\textwidth}
		\centering
		\includegraphics[width=\textwidth]{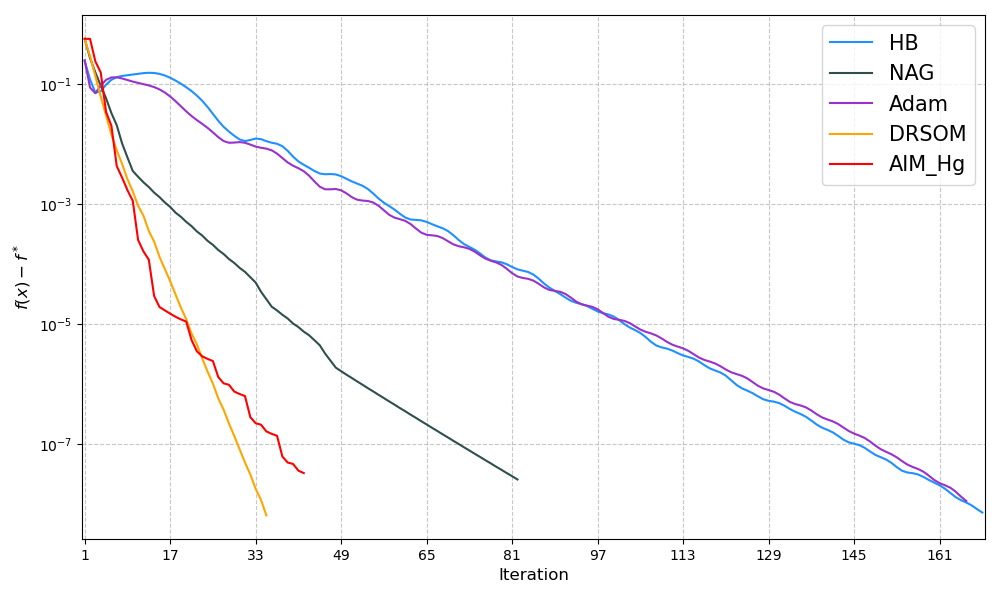}
		\caption{$\lambda=10^{-5}$}
		\label{fig:logistic_1e-5}
	\end{subfigure}
	\hfill
	\begin{subfigure}[b]{0.48\textwidth}
		\centering
		\includegraphics[width=\textwidth]{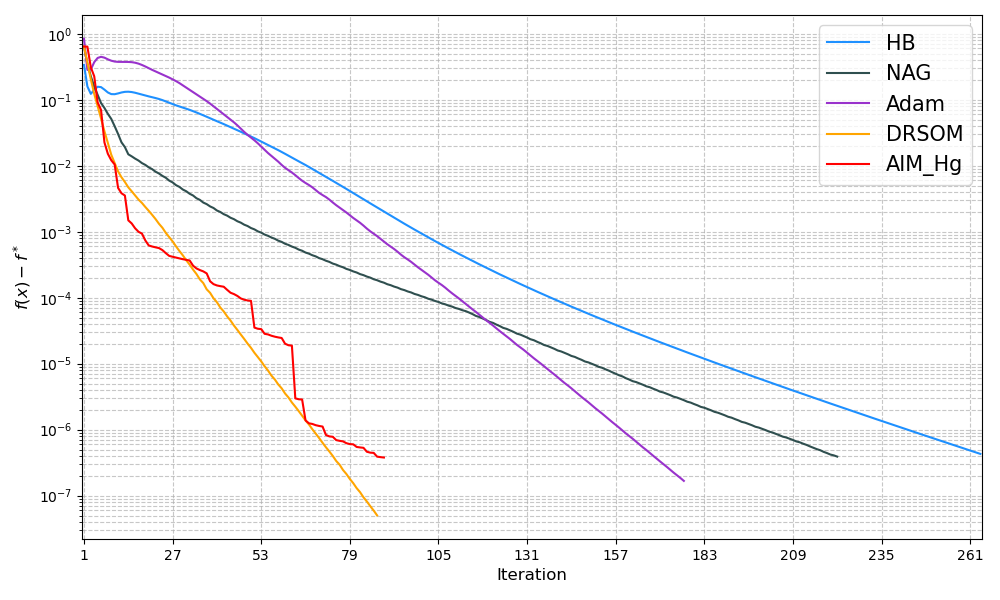}
		\caption{$\lambda=10^{-6}$}
		\label{fig:logistic_1e-6}
	\end{subfigure}

	\vspace{0.5cm}
	\begin{subfigure}[b]{0.48\textwidth}
		\centering
		\includegraphics[width=\textwidth]{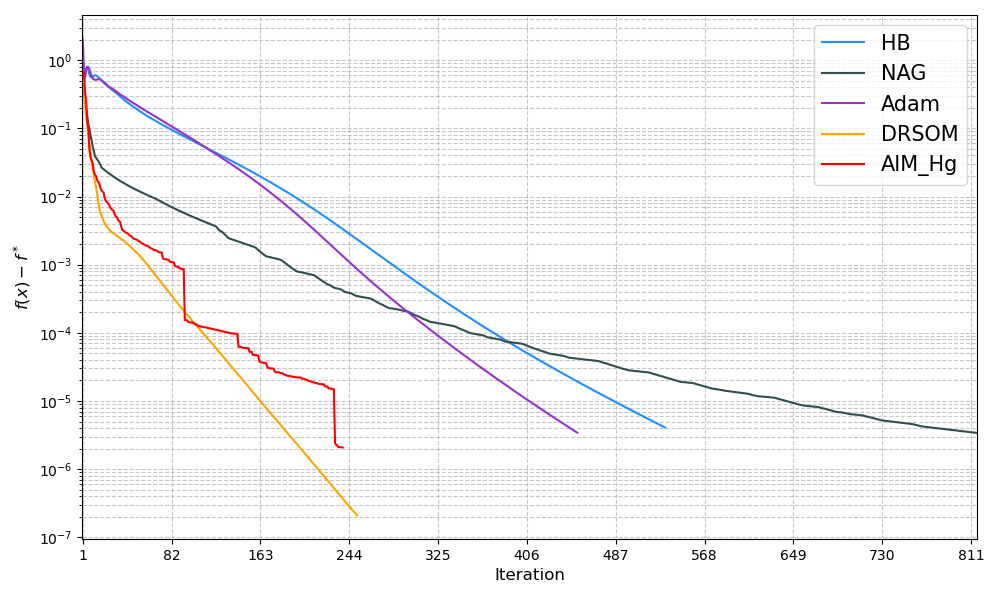}
		\caption{$\lambda=10^{-7}$}
		\label{fig:logistic_1e-7}
	\end{subfigure}
	\caption{Convergence behavior of AIM and selected benchmark algorithms for logistic regression with $\mathcal{L}_2$ regularization under varying regularization strengths. The convergence trajectory of AIM with Hessian-gradient inertia (AIM\_Hg) exhibits minor oscillations but closely approximates that of DRSOM, which achieves the $O(1/k^2)$ convergence rate for convex problems.}
	\label{fig:logistic_regression}
\end{figure}

\subsection{$\mathcal{L}_2$-$\mathcal{L}_p$ Minimization}
We next evaluate AIM on the $\mathcal{L}_2$-$\mathcal{L}_p$ minimization problem, formulated as:
\begin{equation}
	\min_{\bm{x} \in \mathbb{R}^n} \frac{1}{2}\|\mathrm{A}\bm{x} - \bm{b}\|_2^2 + \lambda\|\bm{x}\|_p^p,
\end{equation}
where $\mathrm{A} \in \mathbb{R}^{m \times n}$, $\bm{b} \in \mathbb{R}^m$, and $\lambda > 0$ is a regularization parameter. This problem class is particularly interesting as its convexity properties depend critically on the value of $p$. When $p = 2$, the problem reduces to standard ridge regression (a quadratic program). For $p = 1$, we obtain the well-known Lasso regression, which promotes sparsity while maintaining convexity. When $0 < p < 1$, the problem becomes nonconvex but provides enhanced sparsity promotion, making it valuable for applications in compressed sensing and sparse signal recovery.

To address the nonsmoothness of $\|\cdot\|_p$ when $0 < p \leq 1$, we implement the smoothing technique proposed by Chen \cite{chen2012smoothing}. This approach replaces the $\mathcal{L}_p$ norm with a differentiable approximation:
\begin{equation*}
	\|\bm{x}\|_p^p \approx \sum_{i=1}^{n} s(x_i, \varepsilon)^p,
\end{equation*}
where $\varepsilon > 0$ is a small smoothing parameter and $s(z, \epsilon)$ is a piecewise function that approximates $|z|$:
\begin{equation}
	s(z, \epsilon) = \begin{cases}
		|z|                                           & \text{if}\ |z| > \varepsilon,    \\
		\frac{z^2}{2\epsilon} + \frac{\varepsilon}{2} & \text{if}\ |z| \leq \varepsilon.
	\end{cases}
\end{equation}

For our experiments, we followed the data generation procedure described in Zhang et al. \cite{zhang2022drsom}. We generated random datasets with varying dimensions $m$ and $n$, where the elements of matrix $\mathrm{A}$ were sampled from a standard normal distribution $\mathcal{N}(0, 1)$ with controlled sparsity levels of $r = 15\%$ and $25\%$. To construct the ground truth sparse vector $\bm{v} \in \mathbb{R}^n$, we applied the following mixed distribution:
\begin{equation*}
	v_i \sim
	\begin{cases}
		0                                   & \text{with probability } q = 0.5, \\
		\mathcal{N}\big(0, \frac{1}{n}\big) & \text{otherwise}.
	\end{cases}
\end{equation*}
The response vector was then generated as $\bm{b} = \mathrm{A}\bm{v} + \bm{\delta}$, where $\bm{\delta}$ represents noise with components $\delta_i \sim \mathcal{N}(0, 1)$ for all $i$. We set the regularization parameter $\lambda$ to $\frac{1}{5} \|\mathrm{A}^\top \bm{b}\|_\infty$ and examined different norm parameters $p \in \{0.5, 1, 2\}$ with a smoothing parameter of $\varepsilon = 0.1$.

Tables \ref{tab:p=0.5}, \ref{tab:p=1}, and \ref{tab:p=2} present comprehensive results for $\mathcal{L}_2$-$\mathcal{L}_p$ minimization across different values of $p$, problem dimensions, and sparsity patterns. AIM with Hessian-gradient inertia (AIM\_Hg) demonstrates remarkably robust performance across all test configurations, consistently outperforming traditional first-order methods while remaining competitive with or surpassing DRSOM.

\begin{figure}[htbp]
	\centering
	\begin{subfigure}[b]{0.48\textwidth}
		\centering
		\includegraphics[width=\textwidth]{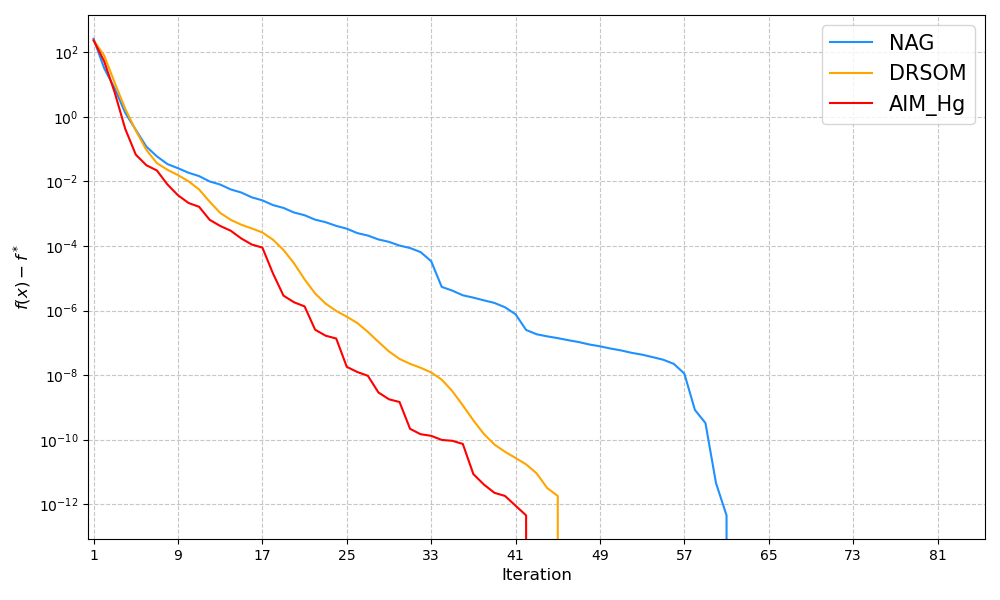}
		\caption{$p=0.5, (m,n)=(1000,1500), r=15\%$}
		\label{fig:p=0.5,r=0.15}
	\end{subfigure}
	\quad
	\begin{subfigure}[b]{0.48\textwidth}
		\centering
		\includegraphics[width=\textwidth]{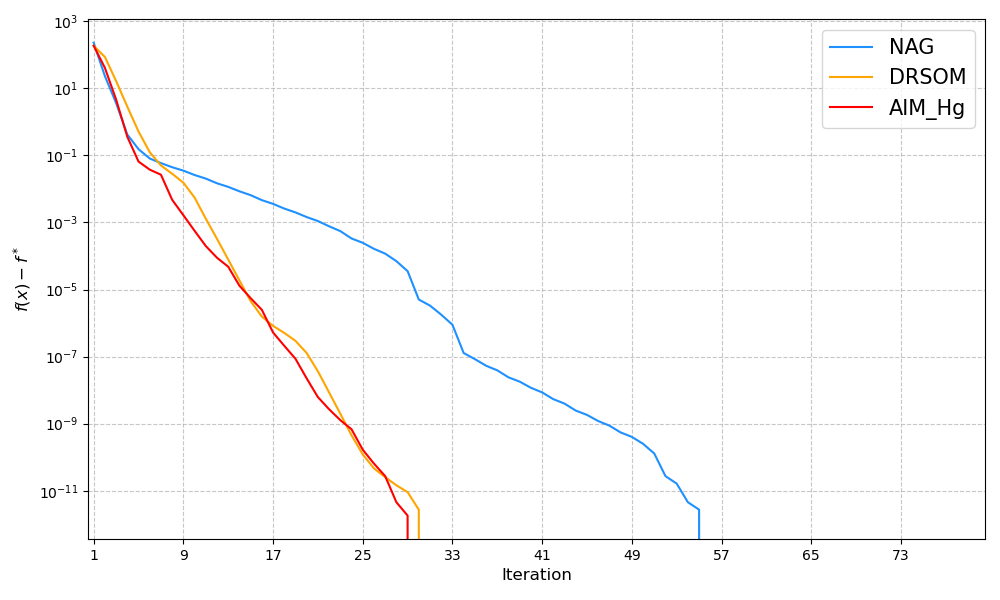}
		\caption{$p=0.5, (m,n)=(1000,1500), r=25\%$}
		\label{fig:p=0.5,r=0.25}
	\end{subfigure}

	\begin{subfigure}[b]{0.48\textwidth}
		\centering
		\includegraphics[width=\textwidth]{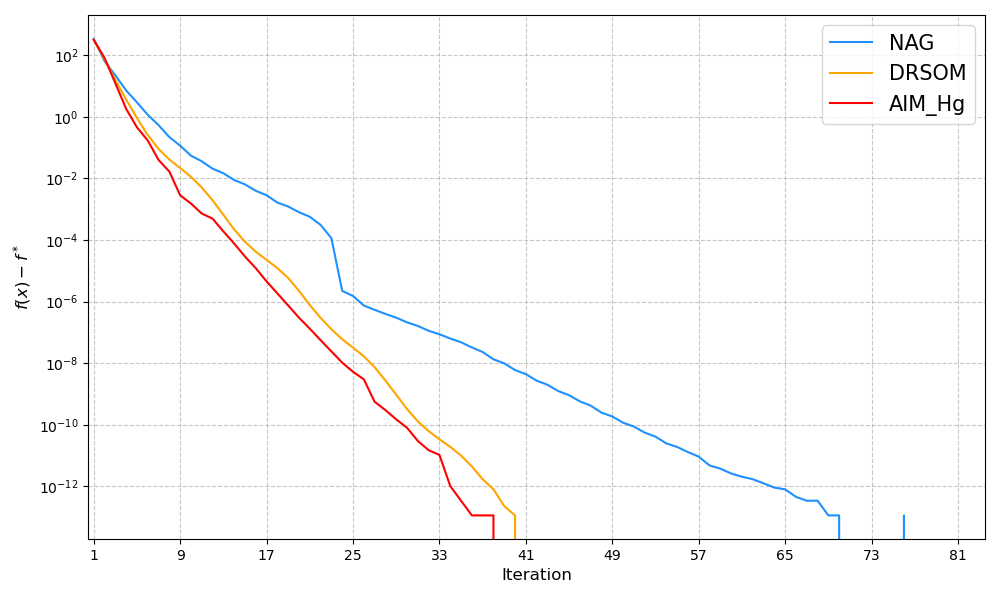}
		\caption{$p=1, (m,n)=(1000,1500), r=15\%$}
		\label{fig:p=1,r=0.15}
	\end{subfigure}
	\quad
	\begin{subfigure}[b]{0.48\textwidth}
		\centering
		\includegraphics[width=\textwidth]{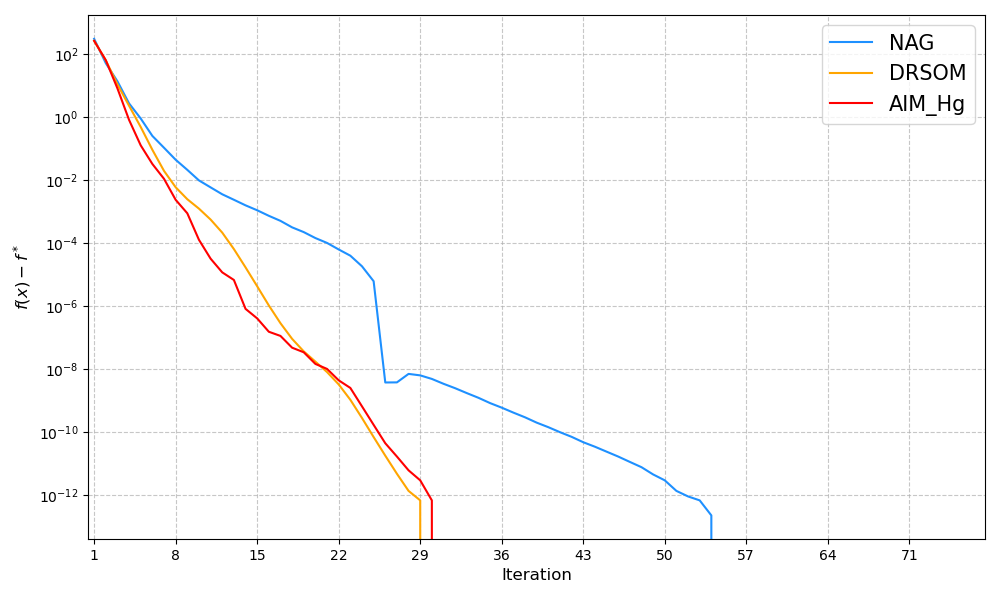}
		\caption{$p=1, (m,n)=(1000,1500), r=25\%$}
		\label{fig:p=1,r=0.25}
	\end{subfigure}

	\begin{subfigure}[b]{0.48\textwidth}
		\centering
		\includegraphics[width=\textwidth]{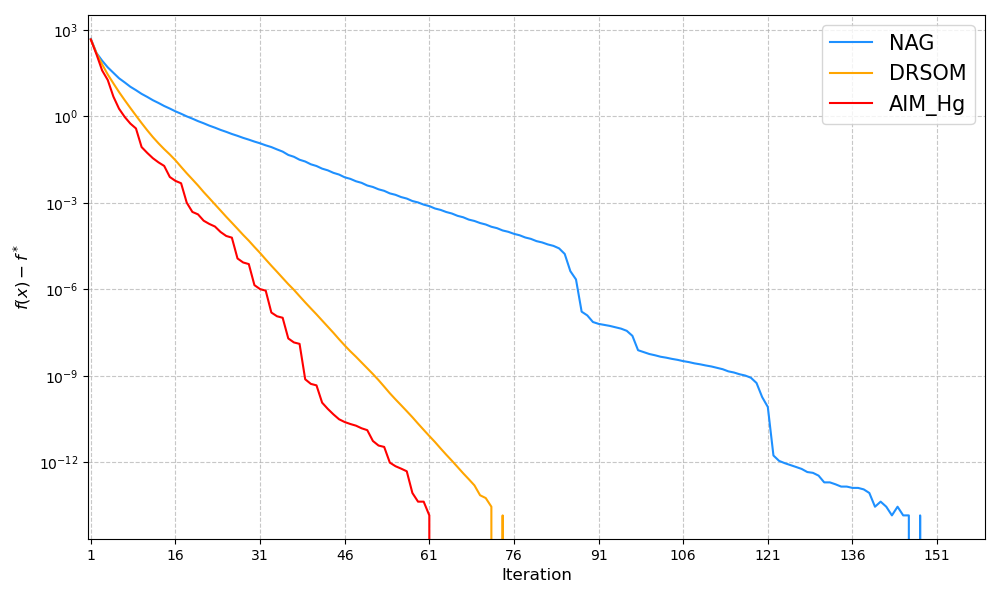}
		\caption{$p=2, (m,n)=(1000,1500), r=15\%$}
		\label{fig:p=2,r=0.15}
	\end{subfigure}
	\quad
	\begin{subfigure}[b]{0.48\textwidth}
		\centering
		\includegraphics[width=\textwidth]{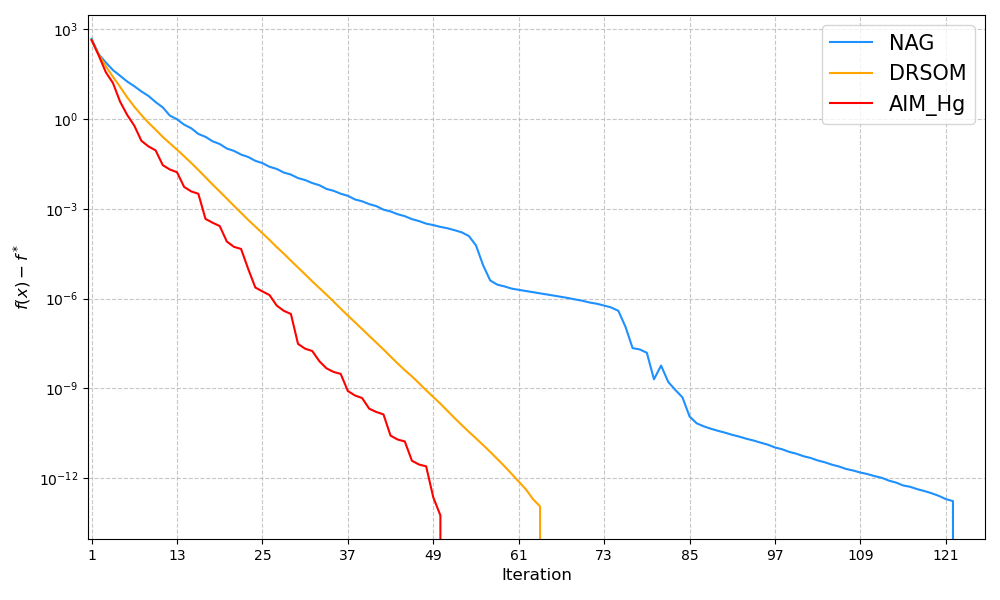}
		\caption{$p=2, (m,n)=(1000,1500), r=25\%$}
		\label{fig:p=2,r=0.25}
	\end{subfigure}

	\caption{Convergence behavior of AIM and selected benchmark algorithms for $\mathcal{L}_2-\mathcal{L}_p$ minimization. AIM with Hessian-gradient inertia (AIM\_Hg) consistently outperforms all other methods. While exhibiting slightly more oscillatory behavior than DRSOM, AIM\_Hg achieves steeper descent trajectories.}
	\label{fig:l2_lp}
\end{figure}

Notably, AIM\_Hg exhibits superior scalability as problem dimensions increase, with more modest growth in iteration counts compared to other methods. Its performance remains consistent across different sparsity levels (15\% vs 25\%), indicating robustness to problem structure variations. Among all AIM variants, the Hessian-gradient formulation consistently delivers the best results, confirming the theoretical advantages of incorporating local curvature information through the inertial term.

\section{Conclusions}
In this paper, we presented the Adaptive Inertial Method (AIM), a novel optimization framework that unifies and extends accelerated first-order methods through customizable inertial terms. Our work makes several significant contributions to the field of convex optimization. First, we developed a flexible framework that adaptively selects parameters without manual tuning, requiring only convexity and local Lipschitz differentiability of the objective function. Second, we proved a global convergence rate of $O(1/k)$ under mild conditions. Third, we demonstrated that under specific parameter choices, AIM coincides with the regularized Newton method, achieving an accelerated rate of $O(1/k^2)$ without requiring computationally expensive matrix inversions.

The flexibility of our framework allows for various inertial term constructions, including those inspired by physical principles, quasi-Newton methods, and Hessian-gradient products, each offering distinct advantages for different problem classes. Our numerical experiments confirm the theoretical properties of AIM and demonstrate its superior performance across diverse optimization scenarios compared to existing methods.

Several promising research directions emerge from this work. First, further theoretical analysis could establish tighter convergence rates for AIM under various conditions, particularly when specific inertial terms are employed. Second, extending AIM to non-convex optimization landscapes merits investigation, as many practical applications involve non-convex objective functions. Third, developing distributed and stochastic variants would enhance AIM's applicability to large-scale machine learning problems. Fourth, exploring connections between AIM and continuous dynamical systems could provide deeper insights into the acceleration mechanisms. Finally, designing problem-specific inertial terms for specialized domains such as deep learning, reinforcement learning, and scientific computing represents a fertile ground for future research.

\bibliographystyle{siamplain}
\bibliography{references}

\newpage
\appendix
\section{Example Pseudocode of AIM}
\begin{algorithm}[H]
	\caption{Adaptive Inertial Method (AIM) with Hessian-gradient Inertia}
	\label{alg:aim}
	\begin{algorithmic}[1]
		\setlength{\baselineskip}{15pt}
		\State \textbf{Input:}  \(\nabla f\)
		\State \textbf{Parameters:} \(\eta = 0.9\), \(\mu = 0.75\), \(\varepsilon = 10^{-3}\), \(\text{gtol} = 10^{-6}\), \(\text{mtol} = 10^{-8}\)
		\State \textbf{Initialization:} \(\bm{x}^0 = \bm{0}\), \(\beta_0 = 1.0\), \(k = 0\)
		\While{True}
		\If{\(\|\nabla f(\bm{x}^k)\| < \text{gtol}\)}
		\State \textbf{break}
		\EndIf
		\State \(\bm{m}^k = \Big(\nabla f(\bm{x}^k) - \nabla f \big(\bm{x}^k - \varepsilon \nabla f(\bm{x}^k) \big) \Big) \ / \ \varepsilon\)
		\If{\(\|\bm{m}^k\| < \text{mtol}\)}
		\State \(\bm{m}^k = \bm{0}\)
		\Else
		\State \(\bm{m}^k = \bm{m}^k \ / \ \|\bm{m}^k\| \)
		\EndIf
		\While{True}
		\State \(\bm{x}^{k+1} = \bm{x}^k - \beta_k \nabla f(\bm{x}^k) + \mu \beta_k (\bm{m}^k)^\top \nabla f(\bm{x}^k) \bm{m}^k\)
		\State \( r_k = \beta_k (\bm{x}^k - \bm{x}^{k+1})^\top (\nabla f(\bm{x}^k) - \nabla f(\bm{x}^{k+1})) \ / \ \|\bm{x}^k - \bm{x}^{k+1}\|_{\mathrm{M}_k}^2 \)
		\If{\(r_k > \eta\)}
		\State \( \beta_k = \beta_k \ / \ 1.5 * \min(1,\ 1 \ / \ r_k) \)
		\Else
		\State \textbf{break}
		\EndIf
		\EndWhile
		\If{\(r_k < 0.5\)}
		\State \( \beta_{k+1} = \beta_k * 2 \ / \ r_k \)
		\Else
		\State \(\beta_{k+1} = \beta_k\)
		\EndIf
		\State \(k = k + 1\)
		\EndWhile
		\State \Return \(\bm{x}^k\)
	\end{algorithmic}
\end{algorithm}

\begin{remark}
	For determining step sizes that satisfy condition \eqref{cri_aim}, we implement the computational strategy outlined in Algorithm \ref{alg:aim} (lines 14-27), following the approach of He et al.\ \cite{he2002improvements}. While the constants in this strategy can be adjusted based on specific problem characteristics, we maintained consistent parameter settings across all numerical experiments.
\end{remark}

\begin{sidewaystable}[ht]
	\section{Detailed Experimental Results}
	\centering
	\caption{Complete Results on $\mathcal{L}_2-\mathcal{L}_p$ Minimization for $p=0.5$}
	\label{tab:p=0.5}
	\resizebox{\textheight}{!}{
		\begin{tabular}{c cccc cccc cccc}
			\toprule
			        & \multicolumn{4}{c}{$(m,n)=(1000,500)$} & \multicolumn{4}{c}{$(m,n)=(1000,1000)$} & \multicolumn{4}{c}{$(m,n)=(1000,1500)$}                                                                                                                                                   \\
			\cmidrule{2-5} \cmidrule{6-9} \cmidrule{10-13}
			        & \multicolumn{2}{c}{$r=15\%$}           & \multicolumn{2}{c}{$r=25\%$}            & \multicolumn{2}{c}{$r=15\%$}            & \multicolumn{2}{c}{$r=25\%$} & \multicolumn{2}{c}{$r=15\%$} & \multicolumn{2}{c}{$r=25\%$}                                                      \\
			\cmidrule{2-3} \cmidrule{4-5} \cmidrule{6-7} \cmidrule{8-9} \cmidrule{10-11} \cmidrule{12-13}
			Method  & k                                      & time/s                                  & k                                       & time/s                       & k                            & time/s                       & k    & time/s   & k    & time/s   & k   & time/s   \\
			\midrule
			GD      & 119                                    & 0.30141                                 & 161                                     & 0.908201                     & 190                          & 1.62147                      & 126  & 0.546443 & 270  & 2.69129  & 164 & 1.6003   \\
			HB      & 362                                    & 1.3651                                  & 369                                     & 1.37096                      & 369                          & 2.02555                      & 375  & 2.31107  & 374  & 2.5371   & 379 & 3.50114  \\
			NAG     & 67                                     & 5.04304                                 & 64                                      & 4.54759                      & 135                          & 14.6337                      & 57   & 8.63799  & 85   & 13.8672  & 80  & 19.5722  \\
			AdaGrad & 350                                    & 1.07925                                 & 354                                     & 0.812473                     & 605                          & 3.21023                      & 745  & 3.67999  & 764  & 4.71473  & 572 & 4.12131  \\
			Adam    & 361                                    & 1.01731                                 & 368                                     & 0.812322                     & 3790                         & 17.6491                      & 2215 & 11.3415  & 3873 & 26.2299  & 375 & 2.73652  \\
			DRSOM   & 39                                     & 0.15768                                 & 33                                      & 0.12755                      & 46                           & 0.454836                     & 34   & 0.370155 & 55   & 0.616161 & 37  & 0.509491 \\
			AIM\_v  & 60                                     & 0.324762                                & 54                                      & 0.309137                     & 82                           & 0.768103                     & 58   & 0.738433 & 95   & 1.36166  & 64  & 0.851241 \\
			AIM\_a  & 58                                     & 0.323237                                & 51                                      & 0.17441                      & 88                           & 1.08104                      & 56   & 0.868122 & 124  & 1.93673  & 75  & 1.0193   \\
			AIM\_QN & 78                                     & 0.859208                                & 70                                      & 0.583572                     & 116                          & 1.34263                      & 69   & 1.1416   & 141  & 2.13134  & 87  & 1.22072  \\
			AIM\_Hg & 32                                     & 0.269338                                & 27                                      & 0.236947                     & 40                           & 0.448958                     & 28   & 0.763978 & 48   & 0.915107 & 37  & 0.868842 \\
			\bottomrule
		\end{tabular}
	}
\end{sidewaystable}

\begin{sidewaystable}[ht]
	\centering
	\caption{Complete Results on $\mathcal{L}_2-\mathcal{L}_p$ Minimization for $p=1$}
	\label{tab:p=1}
	\resizebox{\textheight}{!}{
		\begin{tabular}{c cccc cccc cccc}
			\toprule
			        & \multicolumn{4}{c}{$(m,n)=(1000,500)$} & \multicolumn{4}{c}{$(m,n)=(1000,1000)$} & \multicolumn{4}{c}{$(m,n)=(1000,1500)$}                                                                                                                                             \\
			\cmidrule{2-5} \cmidrule{6-9} \cmidrule{10-13}
			        & \multicolumn{2}{c}{r=15\%}             & \multicolumn{2}{c}{r=25\%}              & \multicolumn{2}{c}{r=15\%}              & \multicolumn{2}{c}{r=25\%} & \multicolumn{2}{c}{r=15\%} & \multicolumn{2}{c}{r=25\%}                                                      \\
			\cmidrule{2-3} \cmidrule{4-5} \cmidrule{6-7} \cmidrule{8-9} \cmidrule{10-11} \cmidrule{12-13}
			Method  & k                                      & time/s                                  & k                                       & time/s                     & k                          & time/s                     & k    & time/s   & k   & time/s   & k    & time/s   \\
			\midrule
			GD      & 81                                     & 0.115108                                & 80                                      & 0.199609                   & 167                        & 1.52548                    & 179  & 1.11336  & 179 & 1.82131  & 144  & 1.6629   \\
			HB      & 363                                    & 1.17214                                 & 370                                     & 1.72182                    & 369                        & 1.51987                    & 376  & 2.04679  & 375 & 2.7871   & 380  & 2.4446   \\
			NAG     & 73                                     & 4.90803                                 & 53                                      & 3.89539                    & 86                         & 9.83025                    & 128  & 14.0669  & 83  & 12.4255  & 77   & 13.0144  \\
			AdaGrad & 257                                    & 0.53429                                 & 354                                     & 0.72822                    & 584                        & 2.37499                    & 521  & 2.01797  & 637 & 4.18466  & 713  & 4.64413  \\
			Adam    & 362                                    & 0.879815                                & 368                                     & 2.10337                    & 3443                       & 15.848                     & 1608 & 6.80827  & 376 & 2.22447  & 2515 & 14.4353  \\
			DRSOM   & 31                                     & 0.129948                                & 31                                      & 0.143879                   & 41                         & 0.276894                   & 38   & 0.315482 & 46  & 0.467993 & 36   & 0.344985 \\
			AIM\_v  & 60                                     & 0.320659                                & 61                                      & 0.310611                   & 72                         & 0.616292                   & 62   & 0.62095  & 72  & 1.00381  & 61   & 0.778789 \\
			AIM\_a  & 53                                     & 0.313399                                & 55                                      & 0.233581                   & 64                         & 0.516485                   & 66   & 0.609043 & 85  & 1.51958  & 65   & 0.833325 \\
			AIM\_QN & 71                                     & 0.305318                                & 65                                      & 0.381454                   & 82                         & 0.753776                   & 81   & 0.69029  & 100 & 1.54455  & 85   & 1.12944  \\
			AIM\_Hg & 26                                     & 0.289671                                & 29                                      & 0.187835                   & 36                         & 0.365211                   & 30   & 0.389351 & 39  & 0.74752  & 33   & 0.502909 \\
			\bottomrule
		\end{tabular}
	}
\end{sidewaystable}

\begin{sidewaystable}[ht]
	\centering
	\caption{Complete Results on $\mathcal{L}_2-\mathcal{L}_p$ Minimization for $p=2$}
	\label{tab:p=2}
	\resizebox{\textheight}{!}{
		\begin{tabular}{c cccc cccc cccc}
			\toprule
			        & \multicolumn{4}{c}{$(m,n)=(1000,500)$} & \multicolumn{4}{c}{$(m,n)=(1000,1000)$} & \multicolumn{4}{c}{$(m,n)=(1000,1500)$}                                                                                                                                          \\
			\cmidrule{2-5} \cmidrule{6-9} \cmidrule{10-13}
			        & \multicolumn{2}{c}{r=15\%}             & \multicolumn{2}{c}{r=25\%}              & \multicolumn{2}{c}{r=15\%}              & \multicolumn{2}{c}{r=25\%} & \multicolumn{2}{c}{r=15\%} & \multicolumn{2}{c}{r=25\%}                                                   \\
			\cmidrule{2-3} \cmidrule{4-5} \cmidrule{6-7} \cmidrule{8-9} \cmidrule{10-11} \cmidrule{12-13}
			Method  & k                                      & time/s                                  & k                                       & time/s                     & k                          & time/s                     & k   & time/s   & k   & time/s  & k   & time/s   \\
			\midrule
			GD      & 127                                    & 0.627286                                & 183                                     & 0.64682                    & 337                        & 2.48032                    & 432 & 2.93249  & 582 & 4.40264 & 628 & 7.64102  \\
			HB      & 365                                    & 1.68929                                 & 372                                     & 1.21685                    & 371                        & 2.16005                    & 376 & 1.88266  & 424 & 2.52242 & 460 & 2.79461  \\
			NAG     & 64                                     & 4.16884                                 & 107                                     & 9.99522                    & 191                        & 21.5741                    & 176 & 20.0268  & 159 & 24.0464 & 126 & 20.6198  \\
			AdaGrad & 581                                    & 1.529                                   & 631                                     & 1.3828                     & 781                        & 3.77067                    & 487 & 2.33737  & 780 & 4.4354  & 726 & 4.36306  \\
			Adam    & 487                                    & 1.71674                                 & 437                                     & 1.11713                    & 366                        & 1.72003                    & 655 & 2.77245  & 378 & 2.11224 & 374 & 2.70752  \\
			DRSOM   & 39                                     & 0.168569                                & 42                                      & 0.173702                   & 65                         & 0.488232                   & 66  & 0.570909 & 76  & 0.91293 & 72  & 0.824639 \\
			AIM\_v  & 93                                     & 0.613957                                & 105                                     & 0.722503                   & 232                        & 2.00246                    & 229 & 1.90265  & 235 & 3.14365 & 193 & 2.3774   \\
			AIM\_a  & 67                                     & 0.4052                                  & 75                                      & 0.310385                   & 154                        & 1.55412                    & 170 & 1.42721  & 188 & 2.57264 & 199 & 2.8096   \\
			AIM\_QN & 74                                     & 0.371021                                & 77                                      & 0.386523                   & 165                        & 1.57105                    & 141 & 1.1852   & 219 & 2.86106 & 173 & 2.46134  \\
			AIM\_Hg & 36                                     & 0.461458                                & 38                                      & 0.220492                   & 53                         & 0.753512                   & 58  & 0.78714  & 62  & 1.17894 & 55  & 0.928901 \\
			\bottomrule
		\end{tabular}
	}
\end{sidewaystable}

\end{document}